\theoremstyle{plain}
\numberwithin{equation}{section}
\newtheorem{theorem}{Theorem}[section]
\newtheorem{lemma}[theorem]{Lemma}
\newtheorem{corollary}[theorem]{Corollary}
\newtheorem{proposition}[theorem]{Proposition}
\theoremstyle{remark}
\newtheorem{remark}[theorem]{Remark}
\def\ve{\varepsilon}
\def\R{\mathbf R}
\def\N{\mathbb N}
\newcommand{\gjms}{\, {\mathbf P}_n^{2s}}
\newcommand{\Q}{\, Q_n^{2s}}
\renewcommand{\S}{\mathbb S}
\numberwithin{equation}{section}
\def\@cite#1#2{[\textbf{#1}\if@tempswa, #2\fi]}
\title[Fractional GJMS equations on higher dimensional spheres]{A Liouville type result for fractional GJMS equations on higher dimensional spheres}
\def\cfac#1{\ifmmode\setbox7\hbox{$\accent"5E#1$}\else\setbox7\hbox{\accent"5E#1}\penalty 10000\relax\fi\raise 1\ht7\hbox{\lower1.05ex\hbox to 1\wd7{\hss\accent"13\hss}}\penalty 10000\hskip-1\wd7\penalty 10000\box7 }
\def\cftil#1{\ifmmode\setbox7\hbox{$\accent"5E#1$}\else\setbox7\hbox{\accent"5E#1}\penalty 10000\relax\fi\raise 1\ht7\hbox{\lower1.1ex\hbox to 1\wd7{\hss\accent"7E\hss}}\penalty 10000\hskip-1\wd7\penalty 10000\box7 } 
\author[Q.N.T. L\^e]{Qu\`ynh N.T. L\^e}
\address[Q.N.T. L\^e]{
University of Economics \& Business, Vietnam National University, Hanoi, Vietnam}
\email{\href{mailto: Q.N.T. L\^e <ngocquynhlt@vnu.edu.vn>}{ngocquynhlt@vnu.edu.vn}
}
\author[Q.A. Ng\^o]{Qu\'{\^o}c Anh Ng\^o}
\address[Q.A. Ng\^o]{
	University of Science, Vietnam National University, Hanoi, Vietnam
	\\
	ORCID iD: 0000-0002-3550-9689}
\email{\href{mailto: Q.A. Ng\^o <nqanh@vnu.edu.vn>}{nqanh@vnu.edu.vn}
}
\author[T.-T. Nguyen]{Tien-Tai Nguyen}
\address[T.-T. Nguyen]{
University of Science, Vietnam National University, Hanoi, Vietnam\\
ORCID iD: 0000-0002-5865-8134}
\email{\href{mailto: T.-T. Nguyen <nttai.hus@vnu.edu.vn>}{nttai.hus@vnu.edu.vn}
}
\begin{document}
	
	\begin{abstract}
Let $n$ be an integer and $s$ be a real number such that $n > 2s \geq 2$. Inspired by the perturbation approach initiated by F. Hang and P. Yang (\textit{Int. Math. Res. Not. IMRN}, 2020), we are interested in non-negative, smooth solution $v$ to the following higher-order fractional equation
\[
\gjms (v) = \Q (\ve v+v^\alpha ) 
\]
on $\S^n$ with $0<\alpha \leq (n+2s)/(n-2s)$, and $\ve \geq 0$. Here $\gjms$ is the fractional GJMS type operator of order $2s$ on $\S^n$ and $\Q =\gjms(1)$ is constant. We show that if $\ve>0$ and $0<\alpha \leq (n+2s)/(n-2s)$, then any positive, smooth solution $v$ to the above equation must be constant. The same result remains valid if $\ve=0$ but with $0<\alpha < (n+2s)/(n-2s)$. As a by-product, with $0<\alpha\leq (n+2s)/(n-2s)$, we compute the sharp constant of  the subcritical/critical Sobolev inequalities 
\[
\int_{\S^n} v \gjms (v) d\mu_{g_{\S^n}}
\geq \frac{\Gamma (n/2 + s)}{\Gamma (n/2 - s )} | \S^n|^\frac{\alpha-1}{\alpha+1}
\Big( \int_{\S^n} v^{\alpha+1} d\mu_{g_{\S^n}} \Big)^\frac{2}{\alpha+1}
\]
for the GJMS operator $\gjms$ on $\S^n$ and for all non-negative functions $v\in H^s(\S^n)$.
\end{abstract}
	
\date{\bf \today \, at \currenttime}
	
\subjclass[2010]{35A15, 35A23, 35C15, 45H05, 58J70}
	
\keywords{fractional GJMS operator; Sobolev inequality; method of moving spheres; Liouville-type result}
	
\maketitle
	
\section{Data availability}
No data was used for the research described in the article	
\section{Introduction}
Let $n$ be an integer and $s$ be a real number such that $n > 2s \geq 2$. Let $0 < \alpha \leq (n+2s)/(n-2s)$ and $\varepsilon > 0$. In this paper, we are interested in Liouville type results for the following higher-order fractional equation
\begin{equation}\label{eq-MAIN}
\gjms (v) = \Q (\ve v+v^\alpha ) \quad \text{on } \S^n,
\end{equation}
Appearing on the left hand side of \eqref{eq-MAIN} is a so-called \textit{fractional} GJMS type operator to be described later and
\[
\Q = \gjms (1)
\] 
is constant. In terms of the Laplace--Beltrami operator $\Delta_{\S^n}$ on $\S^n$, the fractional operator $\gjms$ is given as follows
\begin{equation}\label{eq-GJMS}
\gjms = \frac{\Gamma(B+1/2+s)}{\Gamma(B+1/2-s)} \quad\text{with } B=\sqrt{-\Delta_{\S^n}+\frac{(n-1)^2}4}.
\end{equation}
Note that the action of the operators $B$ and $\gjms$ in any basis of spherical harmonics (and on spherical harmonics of degree $l \in \N_0 = \N\cup \{0\}$) is diagonal. Precisely, on spherical harmonics of degree $l\in \N_0$, the operator $B$ acts by multiplication with $l+ (n-1)/2$ and therefore the operator $\gjms$ acts by multiplication with
\begin{equation}\label{Alpha}
\alpha_{2s,n}(l)= \frac{\Gamma(l+n/2+s)}{\Gamma(l+n/2-s)}.
\end{equation}
In particular, there holds
\[
Q_n^{2s} = \gjms (1) = \frac{\Gamma (n/2 + s)}{\Gamma (n/2 - s )} > 0.
\]
Hence, the operator $\gjms$ can be thought of as $(-\Delta_{\S^n})^s$ perturbed by lower order terms. For integer $s$, this operator is related to the GJMS operators in conformal geometry; see \cite{GJMS, Beckner, FKT}.

In this paper, our range of $s$ is $s \in (0, n/2)$, which implies that the denominator in \eqref{Alpha} has no pole. Hence, it follows from \cite[page 3]{FKT} that
\begin{equation}\label{ProjectGJMS}
\int_{\S^n} v \gjms (v) d\mu_{g_{\S^n}} = \sum_{l\in \N_0} \alpha_{2s,n}(l) \|P_l v\|_2^2 \quad \text{for all } v \in H^s(\S^n),
\end{equation}
where $P_l$ is the projection onto spherical harmonics of degree $l \in \N_0$. Let us remark that, when $s$ is an integer and by directly verifying from \eqref{eq-GJMS} we have the following expression 
\begin{equation}\label{GJMS-integer}
\mathbf P_n^{2s} = \prod\limits_{k =1}^s { \Big( -\Delta_{\S^n} + \Big( \frac n2 - k \Big) \Big( \frac n2 + k - 1 \Big) \Big)}.
\end{equation}
In the literature, the operator $\gjms$ in \eqref{GJMS-integer} includes several important ones. For example, in the case $s=1$, the operator $\gjms$ in \eqref{GJMS-integer} becomes
	\[
	\mathbf P_2 = -\Delta_{\S^n} + \frac{n(n-2)}4,
	\]
	which is the well-known conformal Laplace operator on $\S^n$. This plays the central role in the Yamabe problem as well as the prescribed scalar curvature problem on $\S^n$. In the case $s=2$, the operator $\gjms$ in \eqref{GJMS-integer} becomes
	\[
	\mathbf P_4 =\Big( -\Delta_{\S^n} + \frac{n(n-2)}4 \Big)\Big( -\Delta_{\S^n} + \frac{(n-2)(n-4)}4 \Big),
	\]
	which is the well-known Paneitz operator on $\S^n$. This operator plays the central role the prescribed $Q$-curvature problem on $\S^n$. Equation \eqref{eq-MAIN} can be thought of as the perturbation of the following critical equation
	\[
	\gjms (v) = v^\frac{n+2s}{n-2s}
\quad \text{on } \S^n.
	\]
This equation belongs to a wider class of critical equations, known as the prescribing $Q$-curvature equation for (fractional) GJMS operators, whose the right hand side is $Q v^\frac{n+2s}{n-2s}$ for some given function $Q$ on $\S^n$. Thus, in some sense, by considering \eqref{eq-MAIN} we are interested in subcritical equation for constant $Q$-curvature equations for GJMS operators. 

Our motivation of working on the equation \eqref{eq-MAIN} traces back to a recent F. Hang and P. Yang; see \cite{HangYang}. In this work, a perturbation approach was used. In our language, this leads to the following higher-order equation
	\begin{equation}\label{eq-MAIN-HY}
\mathbf P_n^{2s} (v) = Q_n^{2s} (\ve v+v^{-\alpha}) \quad \text{on } \S^n
	\end{equation}
with odd $n<2s$ and $s$ is an integer. It was proved in \cite{Zhang} for the case $s=2$ and in \cite{HyderNgo} for the general case $s \geq 2$ that if $\ve$ is sufficiently small, then any positive, smooth solution $v$ to \eqref{eq-MAIN-HY} must be constant. Naturally, one wishes to consider the counter-part of \eqref{eq-MAIN-HY}, namely the equation \eqref{eq-MAIN}. However, there are also reasons why we are interested in the equation \eqref{eq-MAIN}. For example, the above equation is in much the same way as the higher-order Lane--Emden equation in $\R^n$, which is 
	\begin{equation*}\label{eq-LE}
		(-\Delta)^s u = u^\alpha \quad \text{in } \R^n;
	\end{equation*}
see \cite{WeiXu, NNPY} and the references therein, to name a few, for further discussion.

Back to the higher-order equation \eqref{eq-MAIN} on $\S^n$, as far as we know, \cite{CLS} is the first work studying Liouville type results. Among other results, it is proved in \cite{CLS} that any smooth, positive solution $v$ to \eqref{eq-MAIN} with $s$ integer, $1\leq \alpha < (n+2s)/(n-2s)$ must be constant. To obtain such a Liouville type result, the authors in \cite{CLS} follow a recent strategy often used to seek for Liouville type result. The strategy consists of two steps: first to transfer the differential equation \eqref{eq-MAIN} on $\S^n$ into a corresponding integral equation in $\R^n$, see \eqref{eq-MAIN-Rn-integral} below, then apply method of moving spheres/planes to classify solutions to the integral equation in $\R^n$; see also \cite{Zhang} and \cite{HyderNgo}. However, the most interesting case $\alpha = (n+2s)/(n-2s)$, namely the critical case, is left in \cite{CLS}. This motivates us.

In this work, to tackle \eqref{eq-MAIN}, we adopt the strategy used in \cite{Zhang} and in \cite{HyderNgo}. As usual, the first step in this strategy is to transfer \eqref{eq-MAIN} on $\S^n$ to the integral equation \eqref{eq-MAIN-Rn-integral} in $\R^n$ by means of the stereographic projection $\pi_N$ to be described later. This step is more or less well-known, but for completeness, we shall briefly sketch the derivation. Throughout the paper, to avoid any confusion, we use $v$ to denote a function in $\S^n$ while we use $u$ to denote a function in $\R^n$. Let us denote by $\pi_N : \S^n \to \R^n$ the stereographic projection from the north pole $N = (1,0,..,0) \in \S^n$. 
 It is well-known that
\[
(\pi_N^{-1})^* (g_{\S^n}) = \big( \frac 2{1+|x|^2} \big)^2 dx^2
\]
and making use of \cite[Lemma 4]{FKT} gives
\[
 (-\Delta)^{\lfloor s \rfloor- s } \Big( \big( \frac 2{1+|x|^2} \big)^{\frac {n+2s}2} \gjms (v) \circ \pi_N^{-1} \Big) =  (-\Delta)^{\lfloor s \rfloor} \Big( \big( \frac 2{1+|x|^2} \big)^{\frac {n-2s}2} v \circ \pi_N^{-1} \Big)
\]
in $\R^n$. (Here $\lfloor \cdot \rfloor$ is the usual floor function.) Therefore, if we let 
	\begin{equation}\label{eq-u=v}
		u = \big( \frac 2{1+|x|^2} \big)^{\frac {n-2s}2} (v \circ \pi_N^{-1}),
	\end{equation}
	then it follows from \eqref{eq-MAIN} that
	\begin{equation}\label{eq-MAIN-Rn}	
	\begin{aligned}
		(-\Delta)^{\lfloor s \rfloor} u &=   (-\Delta)^{\lfloor s \rfloor -s} \Big( \Q  \Big[ \varepsilon \big(\frac{2}{1+|x|^2}\big)^{2s} u+
		\big( \frac 2{1+|x|^2} \big)^{\frac {n+2s}2} \big( v \circ \pi_N^{-1} \big)^\alpha \Big] \Big) \\
		&=   (-\Delta)^{\lfloor s \rfloor -s} \Big( \Q  \Big[ \varepsilon \big(\frac{2}{1+|x|^2}\big)^{2s} u+ \big( \frac 2{1+|x|^2} \big)^{\frac {n+2s}2 - \alpha \frac {n-2s}2} u^\alpha  \Big] \Big)
	\end{aligned}
	\end{equation}	
in $\R^n$. For simplicity, we denote
\begin{equation}\label{eq-F}
F_{\ve, u} (y) = \varepsilon \big(\frac{2}{1+|y|^2}\big)^{2s} u(y)+\big( \frac 2{1+|y|^2} \big)^{\frac {n+2s}2 - \alpha \frac {n-2s}2} u^\alpha(y)
	\end{equation}
and	
	\[
	\sigma = \frac {n+2s}2 - \alpha \frac {n-2s}2
	\]
which is non-negative due to the conditions $0 < \alpha \leq (n+2s)/(n-2s)$ and $n>2s$. Thus, we have just shown that $u$ solves
	\begin{equation}\label{eq-MAIN-Rn}
		(-\Delta)^{\lfloor s \rfloor} u= (-\Delta)^{\lfloor s \rfloor -s}  ( \Q F_{\ve, u} ) \quad\text{in } \R^n.
	\end{equation}
However, our job is not over yet. In fact, we still need to transfer the differential equation \eqref{eq-MAIN-Rn} to some integral equation. To be able to describe the procedure, a notation is needed. For $0 < \alpha < n$ we denote the constant $C(\alpha)$ as follows
	\begin{equation}\label{eq-CAlpha}
		C(\alpha) := \Gamma \big( \frac {n-\alpha}2 \big) \Big[ 2^\alpha \pi^{n/2} \Gamma \big(\frac \alpha 2 \big) \Big]^{-1} .
	\end{equation}
Under the condition $n>2s$ it is clear from \eqref{eq-CAlpha} that $C(2s)>0$. Furthermore, by a direct computation we can verify that $C(2s)|x|^{2s-n}$ is the fundamental solution of the (fractional) polyharmonic equation $(-\Delta)^{s} u = 0$ in $\R^n \setminus \{0\}$. Indeed, using the definition of fractional Laplacian via the Fourier transform, one has for any $\theta >0$ that 
\[
\mathcal F( (-\Delta)^{s} |x|^{-\theta})(\xi) = \frac{(2\pi)^n}{C(\theta) }|\xi|^{\theta+2s-n}.
\]
By taking $\theta =n-2s>0$ and the inverse Fourier transform, we obtain that 
\[
(-\Delta)^{s} \big( |x|^{2s-n} \big) = \frac1{C(2s)}\delta_0,
\]
where $\delta_0$ is the Dirac function at the origin. See also \cite[chapter 2]{CLM}. With the higher power fractional Laplacian $(-\Delta)^{s}$ being understood as 
\[
(-\Delta)^{s} = (-\Delta)^{s - \lfloor s \rfloor} \circ (-\Delta)^{\lfloor s \rfloor}, 
\] 
heuristically speaking, equation \eqref{eq-MAIN-Rn} can be seen as 
\[
(-\Delta )^s u =\Q F_{\ve, u}.
\] 
(See the discussion in \cite[page 8]{FKT} why we cannot directly claim this, but this is just as good for us to prove \eqref{eq-MAIN-Rn-integral} below.) Having the fundamental solution of the polyharmonic equation, one expects that the corresponding integral equation for solution $u$ to \eqref{eq-MAIN-Rn} in $\R^n$ should be
\begin{equation}\label{eq-MAIN-Rn-integral}
u(x) = \gamma_{2s,n} \int_{\R^n} \frac 1{|x-y|^{n-2s}}F_{\ve, u}(y) dy \quad \text{in } \R^n
\end{equation}
with
\[
\gamma_{2s, n} := C(2s) \Q >0.
\]	
In the first result of the paper we show that this is indeed true. We turn this into a theorem which is stated as follows.
	
	\begin{theorem}\label{thm-DE->IE}
		If $v$ is a non-negative, non-trivial, smooth solution to \eqref{eq-MAIN} on $\S^n$, then via the stereographic projection $\pi_N$ from the north pole $N$, the function $u$ defined by \eqref{eq-u=v}, namely
		\[
		u(x) = \big( \frac 2{1+|x|^2} \big)^{\frac {n-2s}2} \big( v \circ \pi_N^{-1} \big) (x),
		\]
		solves \eqref{eq-MAIN-Rn-integral} in $\R^n$. Moreover, $u$ is strictly positive, and hence $v$ is also strictly positive.
	\end{theorem}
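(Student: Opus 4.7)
The plan is to pass from the pseudo-differential equation \eqref{eq-MAIN-Rn} to the integral representation \eqref{eq-MAIN-Rn-integral} by producing the candidate
\[
w(x) := \gamma_{2s,n} \int_{\R^n}|x-y|^{2s-n}F_{\ve,u}(y)\,dy,
\]
showing $w$ solves the same equation as $u$, and then identifying $u$ with $w$ via a Liouville-type uniqueness argument. First I would pin down the decay of $u$ and $F_{\ve,u}$. Since $v$ is smooth, hence bounded, on $\S^n$, the formula \eqref{eq-u=v} gives $u(x)\lesssim (1+|x|)^{-(n-2s)}$. Substituting into \eqref{eq-F} and using $\sigma =(n+2s)/2-\alpha(n-2s)/2$, both summands of $F_{\ve,u}$ enjoy the bound $F_{\ve,u}(y)\lesssim (1+|y|)^{-(n+2s)}$. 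Hence $F_{\ve,u}\in L^{1}(\R^n)$, the convolution defining $w$ is absolutely convergent for every $x\in\R^n$, and splitting the domain into $|y|\leq|x|/2$ and its complement yields $w(x)=O(|x|^{2s-n})$ as $|x|\to\infty$.

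Next I would verify that $w$ satisfies \eqref{eq-MAIN-Rn}. Writing $k=\lfloor s\rfloor$, the distributional identity $(-\Delta)^{s}(C(2s)|x|^{2s-n})=\delta_{0}$ recalled in the introduction can be read as $(-\Delta)^{-s}f(x)=C(2s)\int_{\R^n}|x-y|^{2s-n}f(y)\,dy$, so $w=\Q\,(-\Delta)^{-s}F_{\ve,u}$. Applying $(-\Delta)^{k}$ and using the composition rule $(-\Delta)^{k}\circ(-\Delta)^{-s}=(-\Delta)^{k-s}$, which is unambiguous on the Fourier side for objects as regular as $F_{\ve,u}$, gives $(-\Delta)^{k}w=(-\Delta)^{k-s}(\Q F_{\ve,u})$, i.e. exactly the right-hand side of \eqref{eq-MAIN-Rn}. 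Thus $h:=u-w$ is a tempered distribution with $(-\Delta)^{k}h=0$, so $\widehat{h}$ is supported at the origin and $h$ is a polynomial. Since both $u$ and $w$ tend to $0$ at infinity (by Step 1 and $n>2s$), $h\equiv 0$, which establishes \eqref{eq-MAIN-Rn-integral}.

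Strict positivity is then immediate. From $v\geq 0$, $v\not\equiv 0$ we get $u\geq 0$ and $u\not\equiv 0$, hence $F_{\ve,u}\geq 0$ and $F_{\ve,u}\not\equiv 0$; the strict positivity of the kernel $|x-y|^{2s-n}$ in \eqref{eq-MAIN-Rn-integral} forces $u(x)>0$ for every $x\in\R^n$, so $v>0$ on $\S^n\setminus\{N\}$. Repeating the whole construction with the stereographic projection from the south pole then covers the remaining point $N$ as well. The main obstacle is the middle step: because the right-hand side of \eqref{eq-MAIN-Rn} is itself only a fractional Laplacian of $\Q F_{\ve,u}$ rather than a genuine function, the verification that $w$ solves the equation and the composition of fractional powers has to be justified within the tempered-distribution framework, and it is precisely the decay estimates of Step 1 that guarantee every intermediate object lives there and that the polynomial Liouville step closes the argument.
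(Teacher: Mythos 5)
Your proposal follows the same route as the paper's proof: form the candidate convolution $\widetilde u$, establish that both $u$ and $\widetilde u$ decay like $|x|^{2s-n}$, show $(-\Delta)^{\lfloor s\rfloor}(u-\widetilde u)=0$ (the paper invokes the identity on \cite[page 8]{FKT}, while you argue via composition of fractional powers on the Fourier side — essentially the same justification), conclude via a polyharmonic Liouville argument that the difference is a polynomial vanishing at infinity, hence zero, and finally read off strict positivity of $u$ directly from the integral representation. The one place you are slightly more careful than the paper: you observe that $u>0$ in $\R^n$ a priori gives $v>0$ only on $\S^n\setminus\{N\}$ and close the gap by repeating the construction from the south pole, a detail the paper's proof leaves implicit.
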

	
As described above, Theorem \ref{thm-DE->IE} is now a standard, and there are several ways to prove it. If $s$ is an integer, a typical way to prove is to make use of the super polyharmonic property for solutions $u$ to \eqref{eq-MAIN-Rn}. Using this important property, combined with the use of Green function for Laplace operator on unit ball, one can derive \eqref{eq-MAIN-Rn-integral}. In fact, this is what the authors in \cite{CLS} used to obtain \eqref{eq-MAIN-Rn-integral}. Unfortunately, to be able to establish the super polyharmonic property for solutions, the condition $\alpha \geq 1$ plays a crucial role. This explains why the authors in \cite{CLS} cannot touch the range $0<\alpha<1$. The motivation of writing this paper also starts from this point. There is another way introduced in \cite{CAM08} to get \eqref{eq-MAIN-Rn-integral} without using the super polyharmonic property for solutions $u$ to \eqref{eq-MAIN-Rn}. Instead, the method introduced in \cite{CAM08} exploits the advantages of distributional solutions to \eqref{eq-MAIN-Rn}. In practice, a smooth solution is often a distributional one, hence yielding \eqref{eq-MAIN-Rn-integral} instantaneously. 
	
	In this paper, by making use of the advantages of \eqref{eq-u=v}, we propose another proof, 
which allows us, at the same time, to treat the case $0<\alpha<1$ being left in \cite{CLS} and the case $s$ non-integer. Up to this point, we know that any smooth, positive solution $v$ to \eqref{eq-MAIN} gives rise to a smooth, positive solution $u$ to \eqref{eq-MAIN-Rn-integral}. As the second step of the strategy, one could obtain a Liouville type result from the integral equation \eqref{eq-MAIN-Rn-integral}. Let us now state the second result of this paper.
	
	\begin{theorem}\label{thm-Liouville}
Let $s \geq 1$ and $n>2s$. Then under one of the following conditions
\begin{enumerate}
 \item either $\ve >0$ and $0 < \alpha \leq (n+2s)/(n-2s)$ 
 \item or $\ve = 0$ and $0 < \alpha < (n+2s)/(n-2s)$ 
\end{enumerate}
any non-negative, non-trivial, smooth solution $v$ to \eqref{eq-MAIN} on $\S^n$, if exists, must be constant.
	\end{theorem}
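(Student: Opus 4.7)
The plan is to invoke Theorem~\ref{thm-DE->IE} to transfer the problem to the integral equation \eqref{eq-MAIN-Rn-integral} on $\R^n$ and then apply the method of moving spheres in integral form, following the overall strategy of \cite{CLS}, \cite{Zhang}, and \cite{HyderNgo}. After Theorem~\ref{thm-DE->IE}, it is enough to study the positive smooth $u$ solving \eqref{eq-MAIN-Rn-integral}. Since $v$ extends smoothly to the north pole $N$ and $v(N)>0$, the function $u$ has the sharp decay
\[
u(x) = 2^{(n-2s)/2} v(N)\, |x|^{-(n-2s)} + o\bigl(|x|^{-(n-2s)}\bigr) \qquad \text{as } |x| \to \infty,
\]
which is crucial for starting and closing the moving-sphere analysis.

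For $x_0 \in \R^n$ and $\lambda > 0$, introduce the Kelvin transform $u_{x_0, \lambda}(x) := (\lambda/|x-x_0|)^{n-2s} u(x^*)$ with $x^* := x_0 + \lambda^2(x - x_0)/|x - x_0|^2$. Plugging \eqref{eq-MAIN-Rn-integral} into both $u(x)$ and $u_{x_0, \lambda}(x)$ and changing variables on $B_\lambda(x_0)$ via $z \mapsto z^*$ yields, for $|x - x_0|>\lambda$,
\[
u(x) - u_{x_0, \lambda}(x) = \gamma_{2s, n} \int_{|z - x_0| > \lambda} K(x, z; x_0, \lambda)\, \Phi_{x_0, \lambda}(z)\, dz,
\]
with kernel
\[
K(x, z; x_0, \lambda) := \frac{1}{|x - z|^{n-2s}} - \left(\frac{\lambda}{|x - x_0|}\right)^{n-2s} \frac{1}{|x^* - z|^{n-2s}},
\]
which is strictly positive on $\{|x - x_0|, |z-x_0|>\lambda\}$ in view of the identity $|x^* - z|^2 - (\lambda/|x-x_0|)^2 |x - z|^2 = (|x-x_0|^2 - \lambda^2)(|z - x_0|^2 - \lambda^2)/|x - x_0|^2$, and density
\[
\Phi_{x_0, \lambda}(z) := F_{\ve, u}(z) - \bigl(\lambda/|z-x_0|\bigr)^{n+2s} F_{\ve, u}(z^*).
\]
Using $u(z^*) = (|z - x_0|/\lambda)^{n-2s} u_{x_0, \lambda}(z)$ and the explicit form of $F_{\ve, u}$, the density $\Phi_{x_0, \lambda}$ decomposes as a linear contribution $\ve\bigl[w_{2s}(z) u(z) - \widetilde w_{2s}(z; x_0, \lambda)\, u_{x_0, \lambda}(z)\bigr]$ plus a superlinear contribution $w_\sigma(z) u(z)^\alpha - \widetilde w_\sigma(z; x_0, \lambda)\, u_{x_0, \lambda}(z)^\alpha$, where $w_\beta(z) = (2/(1+|z|^2))^\beta$ and $\widetilde w_\beta$ is its Kelvin-conjugate weight obtained from $(2/(1+|z^*|^2))^\beta$ after absorbing the appropriate power of $\lambda/|z - x_0|$.

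The moving-sphere argument then proceeds in three steps. First, for each $x_0$, the decay of $u$ at infinity and the smoothness of $u$ near $x_0$ give $u_{x_0, \lambda} < u$ on $B_\lambda(x_0)^c$ for every sufficiently small $\lambda>0$; set $\bar\lambda(x_0) \in (0, +\infty]$ to be the supremum of such $\lambda$. Second, the sharp decay rules out $\bar\lambda(x_0) = +\infty$: letting $|x| \to \infty$ inside $u_{x_0, \lambda}(x) \leq u(x)$ would give $\lambda^{n-2s} u(x_0) \leq 2^{(n-2s)/2} v(N)$ for every $\lambda$, forcing $u(x_0) = 0$, which contradicts positivity. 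Third, at the maximal $\bar\lambda(x_0)$, the strict positivity of $K$ together with a careful sign analysis of $\Phi_{x_0, \bar\lambda(x_0)}$ under the hypothesis $u \geq u_{x_0, \bar\lambda(x_0)}$ forces $u \equiv u_{x_0, \bar\lambda(x_0)}$ on $B_{\bar\lambda(x_0)}(x_0)^c$, hence on $\R^n \setminus \{x_0\}$ by the involutive property of the Kelvin transform. Since this Kelvin symmetry holds at every $x_0 \in \R^n$, a classical Liouville lemma of Li--Zhang type identifies
\[
u(x) = a\,\bigl(d/(d^2 + |x - x_1|^2)\bigr)^{(n-2s)/2}
\]
for some $a, d > 0$ and $x_1 \in \R^n$. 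Plugging this ansatz back into \eqref{eq-MAIN-Rn-integral} and matching both sides, the inhomogeneous weights $(2/(1+|y|^2))^\sigma$ and $(2/(1+|y|^2))^{2s}$ force the centering $x_1 = 0$ and the scale $d = 1$, which is exactly the condition that $v$ is constant on $\S^n$.

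The hardest part will be the critical endpoint $\alpha = (n+2s)/(n-2s)$ with $\ve > 0$ (so that $\sigma = 0$): here the superlinear piece of $\Phi_{x_0, \lambda}$ becomes conformally invariant and a whole family of bubbles solves the pure critical equation, so the argument cannot close at the level of the nonlinearity alone. The linear perturbation $\ve w_{2s}(z) u(z)$, whose weight $w_{2s}$ is \emph{not} conformally invariant, is exactly what breaks the degeneracy. Concretely, in the strict subcritical regime $\sigma > 0$ the weight comparison for $w_\sigma$ already delivers strict positivity of the superlinear piece of $\Phi_{x_0, \lambda}$, whereas in the critical case the entire burden falls on a delicate comparison of $w_{2s}(z)$ with its Kelvin conjugate $\widetilde w_{2s}(z; x_0, \lambda)$ on $B_\lambda(x_0)^c$, combined with the sign of $u(z) - u_{x_0, \lambda}(z)$. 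This is the technical heart of the argument, and it explains both why $\ve > 0$ is essential in the critical case and why the hypothesis on $(\ve, \alpha)$ must be split as stated.
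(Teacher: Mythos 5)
Your overall architecture (transfer via Theorem \ref{thm-DE->IE} to the integral equation, then run moving spheres with a Kelvin transform and a positive kernel) matches the paper's, and your decomposition of the density $\Phi_{x_0,\lambda}$ and the positivity of the kernel $K$ are the right starting points. Your recognition that the linear perturbation $\ve w_{2s}u$ is what saves the critical endpoint $\sigma=0$, and that the weight $w_{2s}$ is not conformally invariant, is also the correct diagnosis. However, there is a genuine gap in the heart of your argument.

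You claim that at the maximal radius $\bar\lambda(x_0)$, a sign analysis of $\Phi_{x_0,\bar\lambda(x_0)}$ under $u\geq u_{x_0,\bar\lambda(x_0)}$ forces $u\equiv u_{x_0,\bar\lambda(x_0)}$, and you then invoke a Li--Zhang identification lemma. This stopping claim is not justified here, and the reason is precisely the non-conformally-invariant weight. The comparison you rely on, $w_\beta(z) \geq \widetilde w_\beta(z;x_0,\lambda)$, is equivalent to
\[
L(x_0,\lambda,z)=\frac{\lambda^2\bigl(1+|z|^2\bigr)}{|z-x_0|^2\bigl(1+|z^{x_0,\lambda}|^2\bigr)}\leq 1,
\]
and a direct computation (Lemma \ref{lem-<1} in the paper) shows
\[
|z-x_0|^2\bigl(1+|z^{x_0,\lambda}|^2\bigr)-\lambda^2\bigl(1+|z|^2\bigr)=\bigl(|z-x_0|^2-\lambda^2\bigr)\bigl(1+|x_0|^2-\lambda^2\bigr).
\]
Thus $L<1$ on $\{|z-x_0|>\lambda\}$ \emph{only} when $\lambda<\sqrt{1+|x_0|^2}$, and $L>1$ once $\lambda>\sqrt{1+|x_0|^2}$. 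You never establish an a priori upper bound $\bar\lambda(x_0)\leq\sqrt{1+|x_0|^2}$; your ruling out of $\bar\lambda=+\infty$ via sharp decay does not supply such a bound. Consequently, at $\bar\lambda(x_0)$ the density $\Phi_{x_0,\bar\lambda(x_0)}$ can fail to be nonnegative, the continuation argument near the inversion sphere does not close, and the conclusion $u\equiv u_{x_0,\bar\lambda(x_0)}$ does not follow. Indeed, for $\lambda<\sqrt{1+|x_0|^2}$ the strict inequality $L<1$ makes $\Phi_{x_0,\lambda}$ \emph{strictly} positive (not zero) whenever $u\geq u_{x_0,\lambda}$, so the sphere can always be pushed further in that range; the process truly hangs only at $\sqrt{1+|x_0|^2}$, and there the density vanishes iff $u\equiv u_{x_0,\lambda}$, which is exactly the conclusion you would be trying to reach, so the argument is circular.

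The paper resolves this by building the bound $\lambda\leq\sqrt{1+|x|^2}$ into the definition of $\bar\lambda(x)$ from the outset (see \eqref{eq-lambda}); then only Lemma \ref{lem-<1} is needed, the density is controlled, and Lemma \ref{LeM=002} proves the one-sided inequality $\bar\lambda(x)\geq|x|$. Crucially the paper never needs the equality $u\equiv u_{x,\bar\lambda}$; instead, in Lemma \ref{lem-MS-stop} it keeps only the inequality $u_{x,\lambda}\leq u$ for $0<\lambda<|x|$, takes $x=R\vec e$, $\lambda=R-a$, and lets $R\to\infty$ so that the inversion spheres degenerate into reflection hyperplanes, producing $u(y)\geq u\bigl(y-2(\langle y,\vec e\rangle-a)\vec e\bigr)$ and then (letting $a\searrow 0$) radial symmetry about the origin, whence $v$ constant. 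The a posteriori identity $\bar\lambda(x)=\sqrt{1+|x|^2}$ is computed in Appendix \ref{apd-lambda} \emph{after} the Liouville theorem is proved, precisely because it relies on knowing the explicit form of $u$. To repair your proposal, you would either need to adopt the capped definition of $\bar\lambda$ and the limiting-spheres argument of Lemma \ref{lem-MS-stop}, or supply an independent proof that $\bar\lambda(x_0)\leq\sqrt{1+|x_0|^2}$ for every $x_0$ — which does not seem accessible without already knowing the answer.
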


It is worth emphasizing that Theorem \ref{thm-Liouville} can be though of a prior result. To be precise, it only tells us that if a solution $v$ to \eqref{eq-MAIN} exists, then it must be constant. By a simple calculation, it can be verified that if $\alpha \ne 1$, then the non-zero constant solution $v$ to \eqref{eq-MAIN} is $(1-\ve)^{1/(\alpha-1)}$. However, in the case $\alpha=1$, there is no non-zero constant solution $v$ to \eqref{eq-MAIN}. A similar situation occurs when $\ve \geq 1$, as the corollary below, just a direct consequence of Theorem \ref{thm-Liouville}, shows.

\begin{corollary}\label{cor-Liouville}
Let $n>2s \geq 2$ and $\ve \geq 1$. Then, equation \eqref{eq-MAIN} on $\S^n$ admits no non-negative, non-trivial, smooth solution.
\end{corollary}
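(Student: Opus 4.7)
My plan is to deduce Corollary \ref{cor-Liouville} directly from Theorem \ref{thm-Liouville} combined with a one-line algebraic sign argument. Since $n>2s\geq 2$ forces $s\geq 1$ and since $\ve\geq 1>0$, the hypotheses of part (1) of Theorem \ref{thm-Liouville} are satisfied. Hence any non-negative, non-trivial, smooth solution $v$ of \eqref{eq-MAIN} must be a constant, and Theorem \ref{thm-DE->IE} guarantees that this constant is strictly positive; write $v\equiv c>0$.

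Next, I would substitute $v\equiv c$ into \eqref{eq-MAIN}. Because constants are spherical harmonics of degree $0$, on which $\gjms$ acts by multiplication by $\alpha_{2s,n}(0)=\Q$, the equation collapses to $c\,\Q=\Q(\ve c+c^\alpha)$. Dividing by the positive constant $\Q$ yields the algebraic identity $(1-\ve)\,c = c^\alpha$.

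Finally, I would exploit the sign constraint imposed by $\ve\geq 1$: the left-hand side of this identity is $\leq 0$, whereas $c^\alpha>0$ since $c>0$. For $\ve>1$ this is already a contradiction, and for $\ve=1$ the identity forces $c^\alpha=0$, hence $c=0$, contradicting non-triviality. Either way, no such solution exists.

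There is essentially no obstacle in this argument — the corollary is a short epilogue to Theorem \ref{thm-Liouville} — beyond making sure to invoke Theorem \ref{thm-DE->IE} to rule out $c=0$, since the Liouville theorem alone would only flag the solution as constant and would leave one to separately exclude the trivial constant via the non-triviality hypothesis.
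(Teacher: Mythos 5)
Your argument is correct and is exactly the "direct consequence of Theorem~\ref{thm-Liouville}" route the paper has in mind: reduce to a constant $c>0$ via the Liouville theorem, substitute to get $(1-\ve)c=c^\alpha$, and note that this has no positive solution when $\ve\geq 1$. The only superfluous step is the appeal to Theorem~\ref{thm-DE->IE} to get $c>0$; constancy together with non-triviality and non-negativity already forces $c>0$, though invoking the theorem does no harm. (The paper also sketches an independent proof for integer $s$ by integrating both sides over $\S^n$, but your route is the one the paper actually flags as the derivation.)
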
	
	
In case $s$ is an integer, Corollary \ref{cor-Liouville} can be proved directly by integrating both sides of \eqref{eq-MAIN}. Indeed, as $n>2s >0$ and $\Q > 0$ by integrating we arrive at
\[
 ( 1- \ve ) \int_{\S^n} v d\mu_{\S^n} = \int_{\S^n} v^\alpha d\mu_{\S^n}.
\]
This immediately tells us that $\ve < 1$. Theorem \ref{thm-Liouville} deserves further comments as follows. 
	
\begin{itemize}
 \item It is not difficult to see that the strict inequality $\alpha < (n+2s)/(n-2s)$ for the case $\ve = 0$ in Theorem \ref{thm-Liouville} is \textit{optimal}. In other words, the above Liouville type result does not hold when $\ve =0$ and $\alpha = (n+2s)/(n-2s)$. This is because the corresponding equation \eqref{eq-MAIN-Rn} becomes
\[
(-\Delta)^{s} u= u^\frac{n+2s}{n-2s} \quad \text{in } \R^n
\]
and in this particular case we know that (see e.g. \cite[Theorem 2]{CLZ}) there are many positive, smooth solution of the form
	\[
	u(x) = a \Big(\frac 2{b^2+|x-x_0|^2} \Big)^\frac{n-2s}{2}.
	\]
	for some $x_0 \in \R^n$ and some constants $a, b>0$. This and \eqref{eq-u=v} show that there are many non-constant solutions $v$ to \eqref{eq-MAIN}. Therefore, our Theorem \ref{thm-Liouville} shows how does the linear perturbation effect the critical equation.
	
 \item An immediate consequence of Theorem \ref{thm-Liouville} is that the function $u$ defined by \eqref{eq-u=v} is of the form
	\[
	u(x) = c \Big(\frac 2{1+|x|^2} \Big)^{\frac {n-2s}2}
	\] 
	for some positive constant $c>0$. In the course of the proof of Theorem \ref{thm-Liouville}, we crucially make use of the structure of $\S^n$ as well as the relation \eqref{eq-u=v}. For example, the relation \eqref{eq-u=v} provides us the exact asymptotic behavior of $u$ at infinity. Therefore, by relaxing the relation \eqref{eq-u=v}, it is natural to ask if any smooth, positive solution $u$ to \eqref{eq-MAIN-Rn} in $\R^n$, namely the equation
\[
(-\Delta)^{s} u= \varepsilon \big(\frac{2}{1+|x|^2}\big)^{2s } u+ \big( \frac 2{1+|x|^2} \big)^{\sigma} u^\alpha,
\]
at least in the case $s$ is an integer and $\alpha \ne 1$, is of the above form, up to translations and dilations. It should be noted that in the special case $\ve = 0$, the equation \eqref{eq-MAIN-Rn} in $\R^n$ is very similar to the Matukuma equation; see \cite{Mat} and \cite[Eq. (4.21)]{BFH86}. Therefore, one can ask many similar questions, for example, is any solution to \eqref{eq-MAIN-Rn} radially symmetric? See \cite{YiLi}. Toward an answer for this question, we expect that the method developed in \cite{CAM08} and \cite{NY} could be useful.
\end{itemize}
	
Let us sketch our approach to prove Theorem \ref{thm-Liouville}. With help of Theorem \ref{thm-DE->IE}, we mainly work on the integral equation \eqref{eq-MAIN-Rn-integral} and our aim is to prove that any positive, smooth solution $u$ to \eqref{eq-MAIN-Rn-integral} must be radially symmetric with respect to the origin; see Lemma \ref{lem-MS-stop}. Unlike the similar work \cite{CLS} where the method of moving planes in integral form was used, we use the method of moving spheres. Once we have the symmetry of $u$, by using \eqref{eq-u=v} we are able to conclude that $v$ must be constant. This completes the proof of Theorem \ref{thm-Liouville}.
	
Finally, to illustrate our finding on a Liouville type result for solutions to \eqref{eq-MAIN}, we revisit the sharp subcritical/critical Sobolev inequality (see  \cite{Beckner}, see also \cite{CLS}) for $\gjms$ on $\S^n$, restricted to non-negative functions. 

\begin{theorem}\label{thm-Sobolev}
Let $n > 2s \geq 2$ and $0<\alpha \leq (n+2s)/(n-2s)$. Then, for any non-negative $v \in H^s (\S^n)$, we have the following sharp Sobolev inequality
\begin{equation}\label{eq-sSobolev}
\int_{\S^n} v \gjms (v) d\mu_{g_{\S^n}}
\geq \frac{\Gamma (n/2 + s)}{\Gamma (n/2 - s )}| \S^n|^\frac{\alpha-1}{\alpha+1}
\Big( \int_{\S^n} v^{\alpha+1} d\mu_{g_{\S^n}} \Big)^\frac{2}{\alpha+1}.
\end{equation} 
Moreover, equality occurs if $v$ is any positive constant.
\end{theorem}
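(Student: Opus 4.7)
The plan is to apply a direct variational argument and invoke Theorem \ref{thm-Liouville} to identify the extremals. Set
\[
\mathcal{R}_\alpha [v] = \frac{\int_{\S^n} v \gjms (v) d\mu_{g_{\S^n}}}{\big( \int_{\S^n} v^{\alpha+1} d\mu_{g_{\S^n}} \big)^{2/(\alpha+1)}},
\]
and let $\Lambda_\alpha$ denote the infimum of $\mathcal{R}_\alpha [v]$ over non-negative, non-trivial $v \in H^s(\S^n)$. A direct computation using $\gjms (c) = Q_n^{2s} c$ on any constant $c$ shows $\mathcal{R}_\alpha[c] = Q_n^{2s} |\S^n|^{(\alpha-1)/(\alpha+1)}$; this immediately yields the equality statement of the theorem and reduces \eqref{eq-sSobolev} to the identity $\Lambda_\alpha = Q_n^{2s} |\S^n|^{(\alpha-1)/(\alpha+1)}$.

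In the strictly subcritical range $0 < \alpha < (n+2s)/(n-2s)$ with $\alpha \neq 1$, the Sobolev embedding $H^s(\S^n) \hookrightarrow L^{\alpha+1}(\S^n)$ is compact and, by \eqref{ProjectGJMS}, the quadratic form $v \mapsto \int_{\S^n} v \gjms (v) d\mu_{g_{\S^n}}$ is equivalent to the squared $H^s$-norm. The direct method therefore produces a non-negative minimizer $v$, normalized by $\int_{\S^n} v^{\alpha+1} d\mu_{g_{\S^n}} = 1$, satisfying the Euler--Lagrange equation $\gjms (v) = \Lambda_\alpha v^\alpha$. Elliptic regularity together with the integral representation in Theorem \ref{thm-DE->IE} (after pulling back via \eqref{eq-u=v}) shows $v$ is smooth and strictly positive. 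Setting $w = (Q_n^{2s}/\Lambda_\alpha)^{1/(1-\alpha)} v$ converts the equation into \eqref{eq-MAIN} with $\varepsilon = 0$, and Theorem \ref{thm-Liouville} then forces $w$ (hence $v$) to be constant, whence $\Lambda_\alpha$ equals the value computed above. The borderline case $\alpha = 1$ cannot use the rescaling, but the spectral expansion \eqref{ProjectGJMS}--\eqref{Alpha} directly identifies $\Lambda_1$ as the smallest eigenvalue $\alpha_{2s,n}(0) = Q_n^{2s}$, which is attained precisely on constants.

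The critical case $\alpha = (n+2s)/(n-2s)$ is handled by approximation from below. For any non-negative $v \in H^s(\S^n)$, the Sobolev embedding gives $v \in L^{\alpha+1}(\S^n)$, and since $\S^n$ has finite measure we have $v \in L^{\beta+1}(\S^n)$ for every $\beta \leq \alpha$. Applying the already-proved subcritical inequality to $v$ at any $\beta \in (0, \alpha)$ yields
\[
\int_{\S^n} v \gjms (v) d\mu_{g_{\S^n}} \geq Q_n^{2s} |\S^n|^{(\beta-1)/(\beta+1)} \Big( \int_{\S^n} v^{\beta+1} d\mu_{g_{\S^n}} \Big)^{2/(\beta+1)},
\]
and letting $\beta \nearrow \alpha$ the right-hand side converges to the one in \eqref{eq-sSobolev} by continuity of $L^p$-norms in $p$ on the finite measure space $\S^n$.

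The main technical obstacle is ensuring that the minimizer produced in the subcritical step is non-trivial, smooth, and strictly positive, so that Theorem \ref{thm-Liouville} genuinely applies. Non-triviality comes from the normalization together with strong $L^{\alpha+1}$-convergence of minimizing sequences via compact embedding; smoothness and positivity follow from the integral representation \eqref{eq-MAIN-Rn-integral} combined with the positivity of the kernel $|x-y|^{2s-n}$. Beyond this, the argument is essentially mechanical.
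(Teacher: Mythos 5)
Your proof is correct and inverts the paper's logical structure in a clean way. The paper proves the critical case $\alpha = (n+2s)/(n-2s)$ first: it minimizes the $\ve$-perturbed functional $\mathcal S_\ve$ over $\mathcal W$, uses the strict inequality $\mathcal S_\ve < \mathcal S$ (available only because $\ve > 0$) inside a concentration-compactness argument (Lemma \ref{lem-SSexistence}) to rule out concentration, shows the extremal $v_\ve$ solves \eqref{EqVe}, applies case (1) of Theorem \ref{thm-Liouville} to force $v_\ve$ constant, and finally lets $\ve \searrow 0$; the subcritical inequality is then deduced a posteriori from the critical one by H\"older. You go the other way: you prove the strictly subcritical case directly, where the embedding $H^s(\S^n) \hookrightarrow L^{\alpha+1}(\S^n)$ is compact so no concentration-compactness or $\ve$-perturbation is needed, and only case (2) of Theorem \ref{thm-Liouville} is invoked (with a separate elementary spectral argument at $\alpha = 1$, where the rescaling degenerates); the critical case then follows by sending $\beta \nearrow (n+2s)/(n-2s)$, with dominated convergence on the finite-measure sphere handling the $L^{\beta+1}$-norm. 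The trade-off is that your route bypasses concentration-compactness entirely but rests only on the subcritical Liouville result (essentially the range already in [CLS], extended here to $0 < \alpha < 1$ and non-integer $s$), whereas the paper's route exercises the genuinely new perturbed critical Liouville statement, which is the point of the paper. One step you pass over quickly, as does the paper, is that minimizing over the cone of non-negative functions yields a priori only the variational inequality $\gjms(v) \geq \Lambda_\alpha v^\alpha$ with equality on $\{v>0\}$, not the Euler--Lagrange equality; the standard fix, consistent with what you sketch, is to transfer to the integral form on $\R^n$ where the Riesz kernel $|x-y|^{2s-n}$ is strictly positive, conclude $u>0$ everywhere, and hence that the coincidence set is empty and the full equation holds. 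It would be worth spelling this out, since Theorem \ref{thm-DE->IE} as stated assumes that $v$ solves the equation, not merely an inequality.
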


We prove Theorem \ref{thm-Sobolev} in section \ref{sec-Sobolev}. We shall explain in the beginning of section \ref{sec-Sobolev} that it suffices to consider only the critical case $\alpha= (n+2s)/(n-2s)$. In this critical case, the strategy is as follows. First we transfer a perturbation of \eqref{eq-sSobolev} into a minimizing problem 
\begin{equation}\label{eq-SS-O}
\mathcal S_\ve = \inf_{v \in \mathcal W }\int_{\S^n} \big[ v \gjms (v) - \ve \Q v^2 \big] d\mu_{g_{\S^n}}
\end{equation}
within the set
\[
\mathcal W =\Big\{0\leq v \in H^s (\S^n) : \int_{\S^n} v^\frac{2n}{n-2s} d\mu_{g_{\S^n}} = 1 \Big\}.
\]
Second, to look for an optimizer $v_\ve$, we need to employ a concentration-compactness principle to gain a strong convergence; see Lemma \ref{lem-SSexistence}. Precisely, $v_\ve$ solves
\begin{equation}\label{EqVe}
\gjms (v_\ve) = \ve \Q v_\ve +\mathcal S_\ve v_\ve^\frac{n+2s}{n-2s}  \quad \text{on $\S^n$}.
\end{equation}
Without the linear perturbation in the right-hand side of \eqref{EqVe},  it follows from the dual argument of  \cite{CLS} that any solution of \eqref{EqVe} as $\ve=0$  is always non-negative. However, in this paper, due to the presence of the linear perturbation, we cannot obtain  the same result. As consequence,  we have to restrict our constraint $\mathcal W$ to non-negative functions in $H^s(\S^n)$ and study the critical Sobolev inequality  in this class. Once we have a non-negative solution $v_\ve$ of \eqref{EqVe}, we can apply the Liouville type result. Through a limiting process to arrive at \eqref{eq-sSobolev}, we compute the  sharp constant.

Before closing this section, we briefly mention the organization of the paper. 
	
\tableofcontents
	 	
	
	\section{From differential equations to integral equations: proof of Theorem \ref{thm-DE->IE}}
	\label{sec-DE->IE}
	
	This section is devoted to a proof of Theorem \ref{thm-DE->IE}, namely we show that if $v$ is a non-negative, non-trivial, smooth solution to \eqref{eq-MAIN} on $\S^n$, then the corresponding function $u$ defined by \eqref{eq-u=v} solves \eqref{eq-MAIN-Rn-integral} in $\R^n$. However, it is clear from the previous section that it suffices to show that if $u$ solves \eqref{eq-MAIN-Rn}, then $u$ also solves \eqref{eq-MAIN-Rn-integral}. To this purpose, we let $\widetilde u$ be the right hand side of \eqref{eq-MAIN-Rn-integral}, namely
\[
\widetilde u(x) = \gamma_{2s,n} \int_{\R^n} \frac 1{|x-y|^{n-2s}} F_{\ve, u} (y) dy\quad \text{in } \R^n
\]
with $F_{\ve, u} (y)$ given in \eqref{eq-F} and $\gamma_{2s, n} = C(2s) \Q >0$ as above. We have to prove that 
\[
u \equiv \widetilde u \quad \text{everywhere in } \R^n.
\] 
In the first step, we show that 
\begin{equation}\label{AsymptoticU-U}
u(x) - \widetilde u(x) = O \big(\frac 1{|x|^{n-2s}} \big)
\end{equation}
at infinity. From the definition of $u$, see \eqref{eq-u=v}, namely
	\[
	u(x) = \big( \frac 2{1+|x|^2} \big)^{\frac {n-2s}2} \big( v \circ \pi_N^{-1} \big) (x),
	\]
and the smoothness of $v$ on $\S^n$, it is clear that $ u(x) = O( |x|^{2s-n} )$ near infinity. Next, we shall see that the function $\widetilde u$ also has the decay as that of the function $|x|^{2s-n}$ at infinity. This is done if one can show that
\[
\int_{\R^n} \frac 1{|x-y|^{n-2s}} F_{\ve, u} (y) dy = O \big(\frac 1{|x|^{n-2s}} \big).
\]
To do so, first keep in mind that
\[
\big( \frac 2{1+|y|^2} \big)^{\sigma} u^\alpha(y) = \big( \frac 2{1+|y|^2} \big)^{\frac {n+2s}2} (v\circ \pi_N^{-1})^\alpha (y) \sim \frac 1{|y|^{n+2s}}
\]
and that
\[
\big( \frac 2{1+|y|^2} \big)^{2s} u(y) = \big( \frac 2{1+|y|^2} \big)^{\frac {n+2s}2} (v\circ \pi_N^{-1})(y) \sim \frac 1{|y|^{n+2s}}
\]	
at infinity. Hence, there is some $C>0$ such that
\begin{equation}\label{eq-F<=}
|F_{\ve, u} (y)|	= \ve \big( \frac 2{1+|y|^2} \big)^{2s} u (y) + \big( \frac 2{1+|y|^2} \big)^{\sigma} u^\alpha(y) < \frac C{|y|^{n+2s}}
\end{equation}
everywhere in $\R^n$. Due to the smoothness, it suffices to consider $|x| \geq 2$. We decompose $\R^n$ as follows
\[
\R^n = B_{|x|/2}(x) \cup \big[ \R^n \setminus B_{|x|/2}(x) \big]
\]
Obviously, any $y \in \R^n \setminus B_{|x|/2}(x) $ enjoys the estimate $2|x-y| \geq |x|$. This gives
\begin{align*}
\int_{\R^n \setminus B_{|x|/2}(x) } & \frac {|x|^{n-2s}}{|x-y|^{n-2s}} F_{\ve, u} (y) dy\lesssim \int_{\R^n} \big( \frac 2{1+|y|^2} \big)^{\sigma} u^\alpha(y) dy < +\infty.
\end{align*}
Now for $y \in B_{|x|/2}(x)$ we can estimate $|y| \geq |x| - |x-y| \geq |x|/2 \geq 1$. Hence, together with \eqref{eq-F<=}, we obtain
\begin{align*}
\int_{B_{|x|/2}(x) } \frac {|x|^{n-2s}}{|x-y|^{n-2s}} F_{\ve, u} (y) dy 
&< C |x|^{n-2s}\int_{B_{|x|/2}(x) } \frac 1{|x-y|^{n-2s}} \frac 1{|y|^{n+2s}}dy \\
&\lesssim \frac 1{ |x|^{4s}} \int_{B_{|x|/2}(x) } \frac {dy}{|x-y|^{n-2s}} \\
& \lesssim \frac 1{ |x|^{2s}} \lesssim 1 .
\end{align*}
Putting the above computations together gives $\widetilde u(x) = O( |x|^{2s-n} )$. Thus, we have shown that \eqref{AsymptoticU-U} holds. 

In the next step, we prove that 
\begin{equation}\label{eqDeltaU-U}
(-\Delta)^{\lfloor s \rfloor} \big[ u(x) - \widetilde u(x) \big] =0\quad \text{in } \R^n.	
\end{equation}
Note, for $|x|$ large, that $|F_{\ve, u}(x)| \lesssim (1+|x|^2)^{(-2s-n)/2}$, see \eqref{eq-F<=}. Hence, we can use the following identity from \cite[page 8]{FKT} (see the last line) to obtain that
\[
(-\Delta)^{\lfloor s \rfloor} \int_{\R^n}|x-y|^{2s-n}F_{\ve, u}(y) dy= \frac 1{C(2s)} (-\Delta)^{\lfloor s \rfloor -s} \big( F_{\ve, u}(x) \big). 
\]
It yields
\[
(-\Delta)^{\lfloor s \rfloor} \widetilde u =  \Q (-\Delta)^{\lfloor s \rfloor - s} F_{\ve, u} \quad \text{in } \R^n.	
\]
Together with \eqref{eq-MAIN-Rn}, we arrive at \eqref{eqDeltaU-U}. 
	
In view of \eqref{eqDeltaU-U}, the function $w=u-\widetilde u$ is poly-harmonic in $\R^n$. Hence, we make use of the Liouville theorem for polyharmonic functions, see \cite[Theorem 5]{Martinazzi}, to get that $w$ is a polynomial. (See \cite[Lemma 5.7]{AGHW} for a similar result in the fractional setting.) However, as $w$ vanishes at infinity because of \eqref{AsymptoticU-U}, it must be identically zero, i.e. $u\equiv \widetilde u$ in $\R^n$. 



	
Finally, we prove that $u$ must be strictly positive. If this is the case, then by \eqref{eq-u=v} we conclude that $v$ is also strictly positive. To this end, we assume that there is some $x_0 \in \R^n$ such that $u(x_0)=0$. Using \eqref{eq-MAIN-Rn-integral} we should have
\[
		0 = u(x_0) = \gamma_{2s,n} \int_{\R^n} \frac 1{|x_0-y|^{n-2s}}F_{\ve, u} (y) dy.
\]		
Keep in mind that $\gamma_{2s,n} >0$. The only possibility is that $u \equiv 0$ in $\R^n$. Hence $u$ is trivial. This completes our proof.
	
	
	\section{Classification of solution via the method of moving spheres: proof of Theorem \ref{thm-Liouville}}
	\label{sec-IE-classification}
	
	This section is devoted to the proof of Theorem \ref{thm-Liouville}, namely, we shall show that any non-negative, non-trivial, smooth solution $v$ to \eqref{eq-MAIN} is necessarily constant. Thanks to Theorem \ref{thm-DE->IE}, we can assume at the beginning that $v$ is positive everywhere on $\S^n$. 
	
To prove Theorem \ref{thm-Liouville}, we make use of the method of moving spheres to prove the radial symmetry of the solution $u$ to \eqref{eq-MAIN-Rn-integral} in the spirit of \cite{Li} and \cite{JLX08}, see also \cite{HuiYang}, with some modifications due to the presence of the weight $(1+|y|^2)^{-\sigma}$. It is worth noting that in the case $n >2s$ we still can make use of the method of moving planes, resulting in the radial symmetry of solutions $u$ to \eqref{eq-MAIN-Rn} with respect to the origin. This is the approach used in \cite{CLS}. However, it is not clear for us how to show that $u$ must be of the form $(1+|x|^2)^{(2s-n)/2}$. 
	
In the next paragraph, we recall basics of the inversion and the Kelvin transform necessarily for the method of moving spheres.
	
	Given the parameter $\lambda>0$ we denote by $\xi ^ {x,\lambda}$ the inversion of $\xi \in \R^n$ via the sphere $\partial B_\lambda(x) $ with radius $\lambda$ and center at $x \in \R ^n$. 	
\begin{figure}[H]
\begin{tikzpicture}[scale=0.75]
\draw[-] (0,0) circle (2.5);
\draw[-] (0, 0) node[anchor = east] {$x$} 
-- (1.5,1) node[anchor =south] {$\xi$} 
-- (3,2) node[anchor =west] {$\xi^{x,\lambda} =x+ \frac{\lambda^2 (\xi-x)}{|\xi- x|^2}$} 
-- (0.75,-0.25);
\draw[dashed] (0,0) -- (0.3, 2.485) -- (3,2);
\draw[-] (0, 0) -- (0.75,-0.25) node[anchor =north] {$z^{x,\lambda}$} 
-- (6,-2) node[anchor =west] {$z = x+\frac{\lambda^2 (z^{x,\lambda} - x)}{|z^{x,\lambda}-x|^2}$}
-- (1.5,1) ;
\draw[dashed] (0,0) -- (0.2, -2.485) -- (6,-2);
\node [anchor =north] at (-2,-2) {$\partial B_\lambda (x)$};
\end{tikzpicture}
\caption[]{Inversion in the method of moving spheres.}\label{fig-MS}
\end{figure}
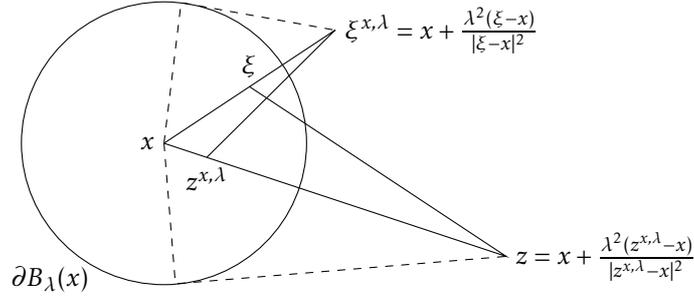

Precisely, $\xi ^ {x,\lambda}$ is given as follows
	\[
	\xi ^ {x,\lambda}= x+ \frac{\lambda^2(\xi-x)}{|\xi-x|^2} \quad \text{for } \xi \neq x. 
	\] 
	It is clear that
	\[
	d(\xi ^
	{x,\lambda}) = \big(\frac{\lambda }{|\xi-x|} \big)^{2n} d\xi
	\] 
and
\begin{equation}\label{eq-|Kelvin|}
|z-x| |\xi-x||\xi^{x,\lambda} - z^{x,\lambda}| = \lambda^2 |\xi-z|.
\end{equation}
	We also denote by $u_{x,\lambda}$ the Kelvin transform of $u$ via the sphere $\partial B_\lambda $, namely
	\[
	u_{x,\lambda} (\xi) = \big( \frac {\lambda}{|\xi-x|} \big)^{n-2s} u (\xi ^ {x,\lambda}) = \big( \frac {\lambda}{|\xi-x|} \big)^{n-2s} u \big( x+ \frac{\lambda^2(\xi-x)}{|\xi-x|^2} \big) .
	\]
	Clearly, the Kelvin transform $u_{x,\lambda}$ is defined in $\R^n \setminus \{ 0 \}$ and
\begin{equation*}\label{eq-KelvinKelvin}
u_{x,\lambda} (\xi^ {x,\lambda}) = \big( \frac {\lambda}{|\xi^ {x,\lambda}-x|} \big)^{n-2s} u (\xi)
= \big( \frac {|\xi-x|}{\lambda} \big)^{n-2s} u (\xi).
\end{equation*}
	The basic idea of the method of moving spheres is to compare $u$ and $u_{x,\lambda}$ pointwise starting from small $\lambda >0$. More precisely, we shall show for small $\lambda > 0$ that
\[
u(y) \geq u_{x,\lambda} (y)
\] 
for any $y$ satisfying $|y-x| \geq \lambda> 0$; see Lemma \ref{lem-MS-small}. Then we increase $\lambda$ toward $\sqrt{1+|x|^2}$ until the above inequality does not hold; see \eqref{eq-lambda}. We shall show in Lemma \ref{LeM=002} that $\lambda \geq |x|$, and at the moment when $\lambda$ passes $|x|$, we are able to capture further information on $u$; see Lemma \ref{lem-MS-stop}.
	
	To be able to compare $u$ and $u_{x,\lambda}$, our proof starts with the following simple observation for $u_{x,\lambda}$.
	
	\begin{lemma}\label{lem-ul}
		There holds
		\begin{align}\label{eq-u_{x,lambda}}
			u_{x,\lambda}(\xi)=\gamma_{2s,n} \int_{ \R ^n} \frac{1}{|\xi-z|^{n-2s}} \; \left[
			\begin{aligned}
			& \varepsilon\big(\frac{2 \lambda^2}{1+|z^{x, \lambda}|^2}\big)^{2s}\; \frac{1}{|z-x|^{4s}}u_{x, \lambda}(z) \\
			&+ \big( \frac {2 \lambda^2}{1+|z^{x, \lambda}|^2} \big)^\sigma \frac{1}{|z-x|^{2 \sigma}} \; u_{x,\lambda}^\alpha (z ) 
			\end{aligned} 
			\right] dz
		\end{align}
		in $\R^n$.
	\end{lemma}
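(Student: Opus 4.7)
The plan is to derive the integral representation for $u_{x,\lambda}$ directly from the integral equation \eqref{eq-MAIN-Rn-integral} satisfied by $u$, via a single change of variables adapted to the inversion $y \mapsto y^{x,\lambda}$. First I would evaluate \eqref{eq-MAIN-Rn-integral} at the point $\xi^{x,\lambda}$ in place of $x$, obtaining
\[
u(\xi^{x,\lambda}) = \gamma_{2s,n} \int_{\R^n} \frac{F_{\ve,u}(y)}{|\xi^{x,\lambda}-y|^{n-2s}}\, dy,
\]
and then multiply both sides by the conformal factor $(\lambda/|\xi-x|)^{n-2s}$ so that the left-hand side becomes precisely $u_{x,\lambda}(\xi)$, per the definition of the Kelvin transform.

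Second, in the integral on the right I would perform the change of variables $y = z^{x,\lambda}$, whose Jacobian is $dy = (\lambda/|z-x|)^{2n}\, dz$, and invoke the geometric identity \eqref{eq-|Kelvin|}, which gives $|\xi^{x,\lambda}-z^{x,\lambda}| = \lambda^2 |\xi-z|/(|\xi-x|\,|z-x|)$. Combining the conformal factor, the Jacobian, and this distance identity, the $|\xi-x|^{n-2s}$ terms cancel and the overall weight in front of $F_{\ve,u}(z^{x,\lambda})$ reduces to $\lambda^{n+2s}/(|z-x|^{n+2s}|\xi-z|^{n-2s})$.

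Third, I would express $F_{\ve,u}(z^{x,\lambda})$ in terms of $u_{x,\lambda}(z)$ using the defining relation $u(z^{x,\lambda}) = (|z-x|/\lambda)^{n-2s}\, u_{x,\lambda}(z)$ and, consequently, $u^\alpha(z^{x,\lambda}) = (|z-x|/\lambda)^{\alpha(n-2s)}\, u^\alpha_{x,\lambda}(z)$. For the linear piece, collecting all powers of $\lambda$ and $|z-x|$ produces the weight $(2\lambda^2/(1+|z^{x,\lambda}|^2))^{2s}\, |z-x|^{-4s}$ multiplying $u_{x,\lambda}(z)$. For the nonlinear piece, the crucial algebraic identity
\[
n+2s - \alpha(n-2s) = 2\sigma,
\]
which follows directly from the definition of $\sigma$, is what makes the remaining powers of $\lambda$ and $|z-x|$ match and collapse into the claimed factor $(2\lambda^2/(1+|z^{x,\lambda}|^2))^\sigma\, |z-x|^{-2\sigma}$ multiplying $u^\alpha_{x,\lambda}(z)$. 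Summing the two contributions yields \eqref{eq-u_{x,lambda}}.

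The main obstacle is purely bookkeeping: correctly tracking three simultaneous sources of $\lambda$'s and $|z-x|$'s — the Jacobian of the inversion, the conformal factor hidden in the distance identity, and the Kelvin scaling of $u$ — without miscounting exponents. The only nontrivial conceptual point is recognizing that the exponent $\sigma$ chosen in \eqref{eq-F} is tailored precisely so that the nonlinear term is covariant under the inversion; once this is observed, the computation is routine.
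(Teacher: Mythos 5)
Your proposal is correct and follows essentially the same path as the paper: plug the integral representation \eqref{eq-MAIN-Rn-integral} evaluated at $\xi^{x,\lambda}$ into the definition of the Kelvin transform, change variables $y=z^{x,\lambda}$ with Jacobian $(\lambda/|z-x|)^{2n}$, invoke \eqref{eq-|Kelvin|}, and collapse the powers of $\lambda$ and $|z-x|$ using $u(z^{x,\lambda})=(|z-x|/\lambda)^{n-2s}u_{x,\lambda}(z)$ together with $2\sigma=n+2s-\alpha(n-2s)$. The intermediate weight $\lambda^{n+2s}/(|z-x|^{n+2s}|\xi-z|^{n-2s})$ you identify, and the final exponent-matching via the identity for $\sigma$, are exactly what the paper's computation produces.
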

	
	\begin{proof}
		This is straightforward. For completeness, we include its proof here. With $y=z^{x,\lambda}$, the relation \eqref{eq-|Kelvin|}, and the identity $2\sigma = n+2s - \alpha (n-2s)$ in hand, we easily get
		\begin{align*}
u_{x,\lambda} (\xi) & = \big( \frac{\lambda}{|\xi-x|} \big)^{n-2s} u(\xi^{x,\lambda})\\
			&= \gamma_{2s,n} \big( \frac{\lambda}{|\xi-x|} \big)^{n-2s} \int_{ \R ^n} 
\left[
			\begin{aligned}
			&\varepsilon \big(\frac{2}{1+|z^{x, \lambda}|^2}\big)^{2s} u(z^{x, \lambda}) \\
			&+\big( \frac 2{1+|z^{x, \lambda}|^2} \big)^\sigma u^\alpha(z^{x,\lambda})
			\end{aligned} 
			\right] 		\frac {d(z^{x, \lambda})}{|\xi^{x,\lambda} -z^{x,\lambda}|^{n-2s}}.	
		\end{align*}
Note that
\[
u (z^{x,\lambda}) = ( \frac {|z-x|}\lambda )^{n-2s} u_{x,\lambda} (z),
\quad
u^\alpha (z^{x,\lambda}) = \big( \frac {|z-x|}\lambda \big)^{(n-2s)\alpha} u_{x,\lambda}^\alpha (z) ,
\]
and
\begin{equation}\label{eq-dz=dz}
d(z^{x,\lambda}) = \big(\frac{\lambda }{|z-x|} \big)^{2n} dz.
\end{equation}
			Therefore, we obtain
			\begin{align*}
		u_{x,\lambda} (\xi)	=\gamma_{2s,n} \int_{ \R ^n} & \frac 1{ (|\xi-x||\xi^{x,\lambda} -z^{x,\lambda}|)^{n-2s}}\\
& \times \left[
			\begin{aligned}
&\varepsilon \lambda^{n-2s} \big(\frac{|z-x|}{\lambda}\big)^{n-2s } \big(\frac{\lambda}{|z-x|}\big)^{2n}\big( \frac {2}{1+|z^{x, \lambda}|^2} \big)^{2s} u_{x, \lambda}(z)\\
		&+ \frac{\lambda^{n-2s} \lambda^{2n}}{\lambda^{\alpha (n-2s)}} 
			\; \big( \frac {2}{1+|z^{x, \lambda}|^2} \big)^\sigma\; \frac{|z-x|^{\alpha (n-2s)}}{|z-x|^{2n}} u_{x,\lambda}^\alpha (z )
\end{aligned} 
			\right] dz\\
			= \gamma_{2s,n} \int_{ \R ^n}& 
\left[
			\begin{aligned}
			&\varepsilon \frac{|z-x|^{n-2s}}{\lambda^{2(n-2s)} |\xi-z|^{n-2s}}.\; \frac{\lambda^{2n}}{|z-x|^{n+2s}}\; \big(\frac{2}{1+|z^{x, \lambda}|^2}\big)^{2s}\; u_{x, \lambda}(z)\\
			& + \frac{1}{|\xi-z|^{n-2s}} \big( \frac {2 \lambda^2}{1+|z^{x, \lambda}|^2} \big)^\sigma \frac{1}{|z-x|^{2 \sigma}} \; u_{x,\lambda}^\alpha (z )
\end{aligned} 
			\right] dz\\
			= \gamma_{2s,n} \int_{ \R ^n} & 
\left[
			\begin{aligned}
			& \varepsilon \frac{1}{|z-x|^{4s}} \frac{1}{|\xi -z|^{n-2s}} \big(\frac{2 \lambda^2}{1+|z^{x, \lambda}|^2}\big)^{2s}\; u_{x, \lambda}(z)\\
			& + \frac{1}{|\xi-z|^{n-2s}} \big( \frac {2 \lambda^2}{1+|z^{x, \lambda}|^2} \big)^\sigma \frac{1}{|z-x|^{2 \sigma}} u_{x,\lambda}^\alpha (z )
\end{aligned} 
			\right]	 dz.
		\end{align*}
		
From this one easily gets \eqref{eq-u_{x,lambda}} as claimed. The proof is complete.
	\end{proof}
	
	Next, we can estimate the difference $u - u_{x,\lambda}$. This step is necessary to run the method of moving spheres.
	
	\begin{lemma}\label{lem-U-U}
		We have
		\begin{align}\label{eq-u-u=FG}
			u (\xi) - u_{x,\lambda} (\xi) &= \gamma_{2s,n} \int_{|z-x| \geq \lambda} K(x, \lambda; \xi,z) G_{\ve,\lambda, u}(z) dz 
		\end{align}
		with
\[
G_{\ve,\lambda, u}(z) = \left[
			\begin{aligned}		
			& \varepsilon \big(\frac{2}{1+|z|^2}\big)^{2s} u(z) +\big( \frac {2}{1+|z|^2} \big)^\sigma \; u^\alpha (z) \\
			& - \varepsilon \big(\frac{2 \lambda^2}{1+|z^{x, \lambda}|^2}\big)^{2s} \frac{1}{|z-x|^{4s}}\; u_{x, \lambda}(z) \\
			& -\big( \frac {2 \lambda^2}{1+|z^{x,\lambda}|^2} \big)^\sigma \frac{1}{|z-x|^{2 \sigma}} \; u_{x, \lambda}^{\alpha}(z) 
\end{aligned} 
			\right]	
\]
and the kernel $K(x, \lambda; \xi,z)$ given by
\begin{equation}\label{eq-K}
		\begin{aligned}
K(x, \lambda; \xi,z)&= \frac 1{|\xi-z|^{n-2s}} - \big( \frac {\lambda}{|\xi-x|} \big)^{n-2s} \frac 1{|\xi^{x, \lambda}-z|^{n-2s}}\\
			& = \frac{1}{|\xi-z|^{n-2s}}- \frac 1{ |\xi-z^{x,\lambda}|^{n-2s}} \big( \frac {\lambda}{|z-x|} \big)^{n-2s}.
		\end{aligned}
\end{equation}		
Moreover, the kernel $K>0$ for any $|\xi-x|>\lambda$ and $|z-x| > \lambda$. 
	\end{lemma}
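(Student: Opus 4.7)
My plan is to derive the representation \eqref{eq-u-u=FG} by splitting the integral \eqref{eq-MAIN-Rn-integral} for $u(\xi)$ over $\R^n = \{|z-x| > \lambda\} \cup \{|z-x| < \lambda\}$ and performing the Kelvin substitution $z = w^{x,\lambda}$ on the inner ball (which gets mapped to the exterior $\{|w-x|>\lambda\}$). The Jacobian is given by \eqref{eq-dz=dz}, the distance transforms by \eqref{eq-|Kelvin|} (swap $z \leftrightarrow z^{x,\lambda}$) as $|\xi - w^{x,\lambda}| = |\xi-x|\,|\xi^{x,\lambda}-w|/|w-x|$, and the values transform by $u(w^{x,\lambda}) = (|w-x|/\lambda)^{n-2s}\, u_{x,\lambda}(w)$. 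Collecting all powers of $|w-x|$ and $\lambda$, and crucially using the algebraic identity $2\sigma = n+2s - \alpha(n-2s)$ to handle the weight in the nonlinear term, the inner-ball contribution rewrites as $(\lambda/|\xi-x|)^{n-2s}$ times an integral over $\{|w-x|>\lambda\}$ of $1/|\xi^{x,\lambda}-w|^{n-2s}$ against precisely the integrand appearing in Lemma~\ref{lem-ul}.

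Applying the same splitting and Kelvin substitution to $u_{x,\lambda}(\xi)$ as represented in Lemma~\ref{lem-ul}, one obtains a symmetric formula in which the roles of the two integrands swap: using the dual transformation $u_{x,\lambda}(w^{x,\lambda}) = (|w-x|/\lambda)^{n-2s} u(w)$, the inner-ball piece of that representation collapses into $(\lambda/|\xi-x|)^{n-2s} \int_{|z-x|>\lambda} F_{\ve,u}(z)/|\xi^{x,\lambda}-z|^{n-2s}\, dz$. Subtracting the two expressions then yields \eqref{eq-u-u=FG} with the stated $G_{\ve,\lambda, u}$ and with $K$ in its first form. The second form of $K$ in \eqref{eq-K} follows immediately from \eqref{eq-|Kelvin|}: replacing $z$ by $z^{x,\lambda}$ there gives $|\xi^{x,\lambda}-z| = |z-x|\,|\xi - z^{x,\lambda}|/|\xi-x|$, which plugs in directly.

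For the positivity of $K$ when $|\xi-x|>\lambda$ and $|z-x|>\lambda$, the key algebraic ingredient is
\[
|\xi-x|^2\, |\xi^{x,\lambda} - z|^2 - \lambda^2\, |\xi - z|^2 \;=\; (|\xi-x|^2 - \lambda^2)(|z-x|^2 - \lambda^2),
\]
which I would verify by writing $\xi^{x,\lambda} - z = \lambda^2(\xi-x)/|\xi-x|^2 - (z-x)$, expanding, and applying the cosine law $2(\xi-x)\cdot(z-x) = |\xi-x|^2 + |z-x|^2 - |\xi-z|^2$. Under the hypotheses, the right-hand side is strictly positive, so $\lambda|\xi-z| < |\xi-x|\,|\xi^{x,\lambda}-z|$, which rearranges exactly to $K(x,\lambda;\xi,z)>0$. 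The only real obstacle in the whole lemma is the exponent bookkeeping in the Kelvin push-forward of $F_{\ve,u}$, but the identity $2\sigma = n+2s - \alpha(n-2s)$ is precisely the relation that makes every power of $\lambda$ and $|w-x|$ cancel correctly, after which the structure of \eqref{eq-u-u=FG} emerges automatically.
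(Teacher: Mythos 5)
Your proposal is correct and follows essentially the same route as the paper: split both integral representations (for $u$ from \eqref{eq-MAIN-Rn-integral} and for $u_{x,\lambda}$ from Lemma~\ref{lem-ul}) over the inner and outer regions of $\partial B_\lambda(x)$, Kelvin-substitute on the inner ball using the Jacobian \eqref{eq-dz=dz} and the distance identity \eqref{eq-|Kelvin|}, collect powers with $2\sigma = n+2s-\alpha(n-2s)$, subtract, and obtain positivity from the quadratic identity $|\xi-x|^2|\xi^{x,\lambda}-z|^2 - \lambda^2|\xi-z|^2 = (|\xi-x|^2-\lambda^2)(|z-x|^2-\lambda^2)$. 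The only cosmetic difference is that you derive the equality of the two forms of $K$ by substituting $z\mapsto z^{x,\lambda}$ directly in \eqref{eq-|Kelvin|}, whereas the paper deduces it from the fact that the two quadratic identities share the same right-hand side — an obviously equivalent step.
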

	
	\begin{proof}
		This is also straightforward. First, we estimate $u$ by using \eqref{eq-MAIN-Rn-integral}. To do so, we decompose the integral $\int_{ \R ^n}$ in \eqref{eq-MAIN-Rn-integral} into two parts $ \int_{\{z:|z-x| \geq \lambda\}} + \int_{\{z:|z-x| \leq \lambda\}}$ to get
\begin{align*}
u (\xi)=& \gamma_{2s,n} \int_{\{z:|z-x| \geq \lambda\}} \frac 1{|\xi-z|^{n-2s}} F_{\ve, u} (z) dz \\
&\quad+ \gamma_{2s,n} \int_{\{z^{x, \lambda}:|z^{x, \lambda}-x| \leq \lambda\}} \frac 1{|\xi-z^{x, \lambda}|^{n-2s}} F_{\ve, u} (z^{x, \lambda}) d(z^{x, \lambda}).
\end{align*}
For the integral $\int_{\{z^{x, \lambda}:|z^{x, \lambda}-x| \leq \lambda\}}$ we simply make use of \eqref{eq-dz=dz} and the change of the variable $z \mapsto z^{x, \lambda}$ to get 	
		\begin{align*}
			u (\xi)=& \gamma_{2s,n} \int_{|z-x| \geq \lambda} \frac 1{|\xi-z|^{n-2s}} F_{\ve, u} (z) dz \\
			&+ \gamma_{2s,n} \int_{|z-x| \geq \lambda} \frac 1{|\xi-z^{x, \lambda}|^{n-2s}} F_{\ve, u} (z^{x, \lambda}) \big(\frac{\lambda }{|z-x|} \big)^{2n} dz.
		\end{align*}
As
\[
u_{x,\lambda} (z) = \big( \frac {\lambda}{|z-x|} \big)^{n-2s} u (z^ {x,\lambda})
\] 
we further get
	\begin{align*}
u(\xi) =& \gamma_{2s,n}\int_{|z-x| \geq \lambda} \frac{1}{|\xi-z|^{n-2s}}\Big[ \varepsilon \big(\frac{2}{1+|z|^2}\big)^{2s} u(z) +\big( \frac {2}{1+|z|^2} \big)^\sigma \; u^\alpha (z ) \Big]\; dz \\
			&+\gamma_{2s,n} \int_{|z-x| \geq \lambda } \frac 1{ |\xi-z^{x,\lambda}|^{n-2s}}
\left[
			\begin{aligned}	
			& \varepsilon \big(\frac{2}{1+|z^{x, \lambda}|^2}\big)^{2s} \big(\frac{\lambda}{|z-x|}\big)^{n+2s} u_{x, \lambda}(z) \\
&+\big( \frac {2}{1+|z^{x,\lambda}|^2} \big)^\sigma \big(\frac{\lambda}{|z-x|}\big)^{2n-\alpha(n-2s)} u_{x, \lambda}^{\alpha}(z)
\end{aligned} 
			\right]				dz.
		\end{align*}
		In a same way, we obtain a similar decomposition of $u_{x,\lambda}$ by using \eqref{eq-u_{x,lambda}} in Lemma \ref{lem-ul}. Indeed, by letting
\[
F^\lambda_{\ve, u} (z) = \varepsilon\big(\frac{2 \lambda^2}{1+|z^{x, \lambda}|^2}\big)^{2s}\; \frac{1}{|z-x|^{4s}}u_{x, \lambda}(z) + \big( \frac {2 \lambda^2}{1+|z^{x, \lambda}|^2} \big)^\sigma \frac{1}{|z-x|^{2 \sigma}} \; u_{x,\lambda}^\alpha (z ) 
\]
we clearly have
\begin{align*}		
			u_{x,\lambda}(\xi)= & \gamma_{2s,n} \int_{|z-x|\geq \lambda} \frac{1}{|\xi-z|^{n-2s}} F^\lambda_{\ve, u} (z) dz\\
 & + \gamma_{2s,n} \int_{\{z^{x, \lambda} : |z^{x, \lambda}-x|\leq \lambda\}} \frac{1}{|\xi-z^{x, \lambda}|^{n-2s}} F^\lambda_{\ve, u} (z^{x, \lambda} ) d (z^{x, \lambda})\\
= & \gamma_{2s,n} \int_{|z-x|\geq \lambda} \frac{1}{|\xi-z|^{n-2s}} F^\lambda_{\ve, u} (z) dz\\
 & + \gamma_{2s,n} \int_{|z-x|\geq \lambda} \frac{1}{|\xi-z^{x, \lambda}|^{n-2s}} F^\lambda_{\ve, u} (z^{x, \lambda} ) \big(\frac{\lambda }{|z-x|} \big)^{2n} dz .			
\end{align*}
By using \eqref{eq-|Kelvin|} one can verify that
\begin{align*}
\frac{1}{|\xi-z^{x, \lambda}|^{n-2s}} & F^\lambda_{\ve, u} (z^{x, \lambda} ) \big(\frac{\lambda }{|z-x|} \big)^{2n} \\
&= \frac{{\lambda} ^{n-2s}}{|\xi-x|^{n-2s} |\xi^{x,\lambda} - z|^{n-2s}} 
\left[
			\begin{aligned}	
			& \varepsilon \big(\frac{2}{1+|z|^2}\big)^{2s} u(z) \\
			& + \big( \frac {2}{1+|z|^2} \big)^\sigma u^\alpha (z) 
			\end{aligned}
			\right].
\end{align*}
Hence,
		\begin{align*}
			u_{x, \lambda}(\xi)=& \gamma_{2s,n} \int_{|z-x| \geq \lambda} \frac{1}{|\xi-z|^{n-2s}} \left[
			\begin{aligned}	
			& \varepsilon\big(\frac{2 \lambda^2}{1+|z^{x, \lambda}|^2}\big)^{2s} \frac{1}{|z-x|^{4s}} u_{x, \lambda}(z)\\
			&+\big( \frac {2 \lambda^2}{1+|z^{x, \lambda}|^2} \big)^\sigma \frac{1}{|z-x|^{2 \sigma}} \; u_{x,\lambda}^\alpha (z )
			\end{aligned}
			\right] dz\\
		&+\gamma_{2s,n} \int_{|z-x| \geq \lambda} \frac{{\lambda} ^{n-2s}}{|\xi-x|^{n-2s} |\xi^{x,\lambda} - z|^{n-2s}} 
\left[
			\begin{aligned}	
			& \varepsilon \big(\frac{2}{1+|z|^2}\big)^{2s} u(z) \\
			& + \big( \frac {2}{1+|z|^2} \big)^\sigma u^\alpha (z) 
			\end{aligned}
			\right] dz.
		\end{align*}
Next, combining the above two decompositions for $u(\xi)$ and $u_{x,\lambda}(\xi)$ yields
	\begin{align*}
	u (\xi) - u_{x,\lambda} (\xi) = \gamma_{2s,n} \int_{|z-x| \geq \lambda} 
	\left( 
\begin{aligned}
& k_1 
	\left[\varepsilon \big(\frac{2}{1+|z|^2}\big)^{2s} u(z)+\big( \frac {2}{1+|z|^2} \big)^\sigma u^\alpha (z)
	\right]\\
	&-k_2 
	\left[
	\begin{aligned}
	& \varepsilon \big(\frac{2 \lambda^2}{1+|z^{x, \lambda}|^2}\big)^{2s} \frac{1}{|z-x|^{4s}}\; u_{x, \lambda}(z) \\
	&+\big( \frac {2 \lambda^2}{1+|z^{x,\lambda}|^2} \big)^\sigma \frac{1}{|z-x|^{2 \sigma}} \; u_{x, \lambda}^{\alpha}(z) 
	\end{aligned}
	\right]	
\end{aligned}
\right) dz 
 \end{align*}
		with $k_1, k_2$ being
		\[
		k_1= k_1(x, \lambda; \xi,z)= \frac 1{|\xi-z|^{n-2s}} - \big( \frac {\lambda}{|\xi-x|} \big)^{n-2s} \frac 1{|\xi^{x, \lambda}-z|^{n-2s}}
		\]
and		
		\[ 
		k_2= k_2(x, \lambda; \xi,z)= \frac{1}{|\xi-z|^{n-2s}}- \frac 1{ |\xi-z^{x,\lambda}|^{n-2s}} \big( \frac {\lambda}{|z-x|} \big)^{n-2s}.
		\]
The positivity of the kernels $k_1> 0$ and $k_2 > 0$ for any $|\xi-x|>\lambda$ and $|z-x|>\lambda$ follows from the following identities
		\begin{align*}
			\big( \frac {|\xi-x|}{\lambda} \big)^2 |\xi^{x, \lambda}-z |^2 - |\xi-z |^2 & =\frac{(|z-x|^2-\lambda^2) (|\xi-x|^2-\lambda^2)}{\lambda^2}
			\end{align*}
and
		\begin{align*}
			\big( \frac {|z-x|}{\lambda} \big)^2 |\xi-z^{x, \lambda} |^2 - |\xi-z |^2 = \frac{(|z-x|^2-\lambda^2) (|\xi-x|^2-\lambda^2)}{\lambda^2}.
		\end{align*}
In addition, the above calculation shows $k_1=k_2$. Hence, putting $k_1=k_2=K$ gives \eqref{eq-u-u=FG}. The proof is complete.
	\end{proof}
	
	In the following lemma, we show that the method of moving spheres can start from some very small $\lambda>0$. The argument is standard and depends heavily on the smoothness of $u$ around the center $x\in \R ^n$ and the asymptotic behavior of $u$ near infinity.
	
	\begin{lemma}\label{lem-MS-small}
		There exists some $\lambda_0 > 0$ such that for any $\lambda \in (0, \lambda_0)$, we have 
		\[
		u(y) \geq u_{x,\lambda} (y)
		\] 
for any $y$ satisfying $|y-x| \geq \lambda> 0$.
	\end{lemma}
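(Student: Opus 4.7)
The plan is to convert the pointwise comparison $u(y) \geq u_{x,\lambda}(y)$ for $|y-x| \geq \lambda$ into a monotonicity statement in the radial direction and then to handle the near-$x$ and far-from-$x$ regimes separately. Writing $y = x + r\omega$ with $r = |y-x|$ and $\omega \in \S^{n-1}$, the Kelvin transform reads $u_{x,\lambda}(y) = (\lambda/r)^{n-2s} u(x+(\lambda^2/r)\omega)$. Using $\lambda^{n-2s} = r^{(n-2s)/2}(\lambda^2/r)^{(n-2s)/2}$, the inequality $u(y) \geq u_{x,\lambda}(y)$ is equivalent to
\[
g(r,\omega) \geq g(\lambda^2/r,\omega), \qquad g(t,\omega) := t^{(n-2s)/2} u(x+t\omega).
\]
Since $r \geq \lambda$ forces $\lambda^2/r \leq \lambda \leq r$, it suffices to compare these two values of $g$.

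For the near regime, I would exploit the smoothness of $u$ at $x$ together with the strict positivity $u(x) > 0$ guaranteed by Theorem~\ref{thm-DE->IE}. Differentiating gives
\[
g'(t,\omega) = t^{(n-2s)/2-1}\Bigl[\frac{n-2s}{2}\, u(x+t\omega) + t\,(\omega \cdot \nabla u)(x+t\omega)\Bigr],
\]
whose bracketed factor tends to $\tfrac{n-2s}{2}\, u(x) > 0$ as $t \to 0^+$ uniformly in $\omega$. By compactness of $\S^{n-1}$ there exists $r_0 > 0$ such that $g(\cdot,\omega)$ is strictly increasing on $(0, r_0]$ for every $\omega$. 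Hence whenever $\lambda \leq r_0$ and $\lambda \leq r \leq r_0$, both $r$ and $\lambda^2/r$ lie in $(0,r_0]$ and monotonicity yields the desired inequality on this annular region.

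For the far regime $r = |y-x| \geq r_0$, I would invoke the explicit form \eqref{eq-u=v} together with the smoothness and strict positivity of $v$ on $\S^n$, which furnish constants $c_1, c_2 > 0$ such that
\[
c_1 (1+|y|)^{2s-n} \leq u(y) \leq c_2 (1+|y|)^{2s-n} \qquad \text{for all } y \in \R^n.
\]
Applying the lower bound to $y$ and the upper bound to $y^{x,\lambda} \in \overline{B_\lambda(x)}$ (so that $(1+|y^{x,\lambda}|)^{2s-n} \leq 1$) produces a constant $C = C(x, r_0) > 0$ with
\[
u(y) \geq C\, r^{2s-n}, \qquad u_{x,\lambda}(y) \leq c_2 \lambda^{n-2s}\, r^{2s-n},
\]
so the comparison collapses to $\lambda^{n-2s} \leq C/c_2$. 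Taking $\lambda_0 := \min\{r_0, (C/c_2)^{1/(n-2s)}\}$ then handles the outer annulus and completes the proof.

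The only genuinely delicate point is producing the uniform two-sided decay $u(y) \sim (1+|y|)^{2s-n}$; this is exactly what the conformal change \eqref{eq-u=v} delivers, and is the reason the argument relies on the spherical origin of $u$ rather than on the integral equation \eqref{eq-MAIN-Rn-integral} alone.
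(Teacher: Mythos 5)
Your proposal is correct and follows essentially the same two-regime strategy as the paper: the monotonicity of $t\mapsto t^{(n-2s)/2}u(x+t\omega)$ near $t=0$ handles the annulus close to $x$, and the decay $u(y)\asymp (1+|y|)^{2s-n}$ inherited from \eqref{eq-u=v} handles large $|y-x|$. The only cosmetic difference is that you extract the lower bound on $u$ directly from the two-sided comparability coming from \eqref{eq-u=v} and the compactness of $\S^n$, while the paper supplements the decay estimate with the integral equation \eqref{eq-MAIN-Rn-integral} to bound $u$ from below on the bounded region; both routes are valid.
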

	
	\begin{proof}
The proof consists of two steps. First, because $u>0$ is $C^1$, we obtain
		\[
		\nabla_y \big(|y-x|^{\frac{n-2s}2} u(y) \big)=\frac{n-2s}2 |y-x|^{\frac{n-2s}2-2} u(y) (y-x) + |y-x|^{\frac{n-2s}2}\; \nabla u(y),
		\]
which yields
		\begin{align*}
			\langle \nabla_y \big(|y-x|^{\frac{n-2s}2} u(y) \big), y-x \rangle &= |y-x|^{\frac{n-2s}2} u(y) \big( \frac{n-2s}2 + \langle \nabla \log u (y), y-x \rangle \big) \\
			&> |y-x|^{\frac{n-2s}2} u(y) \big( \frac{n-2s}2 - |\nabla \log u |_\infty |y-x| \big) .
		\end{align*}		
Hence, if $|y-x|$ is sufficiently small, say 
\[
0<|y-x| <\lambda_1: = \min \Big\{\frac{n-2s}{2 |\nabla \log u|_\infty} , 1 \Big\}, 
\]
then we must have 
\[
\langle \nabla_y \big(|y-x|^{\frac{n-2s}2} u(y) \big), y-x \rangle > 0 \quad \text{for all } 0< |y-x| < \lambda_1.
\]			
		Hence, $|y-x|^{\frac{n-2s}2} u(y)$ is increasing in the direction of $y$ with $0< |y-x| < \lambda_1$. In particular, for $0<\lambda<|y-x|<\lambda_1$, by comparing at the two points $y$ and $y^{x, \lambda}$ we must have
\[
|y-x|^{\frac{n-2s}2} u(y) \geq \big| y^{x, \lambda} -x \big|^{\frac{n-2s}2} u(y^{x, \lambda}).
\]
This shows that		
\[
u(y) \geq \Big( \frac{\big| y^{x, \lambda} -x \big|}{| y-x|} \Big)^{\frac{n-2s}2} u(y^{x, \lambda}) = u_{x,\lambda} (y)
\] 
for any $y$ satisfying $0<\lambda< |y-x| \leq \lambda_1$. In the second step, we aim to consider $|y-x| \geq \lambda_1$, and this requires the smallness of $\lambda$ leading to the threshold $\lambda_0$. To do so, first we make use of \eqref{eq-u=v}, namely
\[
		u = \big( \frac 2{1+|x|^2} \big)^{\frac {n-2s}2} (v \circ \pi_N^{-1}),
\]
and the positivity of $v$ everywhere on $\S^n$ to get
\begin{equation}\label{eq-uu>1}
u(y) \gtrsim \frac {C_1}{|y|^{n-2s}} \quad \text{for all } |y| \geq R
\end{equation}
for some constants $C_1>0$ and $R>0$. For the region $|y| \leq R$ we make use of the integral equation \eqref{eq-MAIN-Rn-integral} to get
\begin{equation}\label{eq-uu>2}
\begin{aligned}
u(y) & \geq \gamma_{2s,n} \int_{|z| \leq R} \frac 1{|y-z|^{n-2s}} \big( \frac 2{1+|z|^2} \big)^\sigma u^\alpha(z) dz\\
& \geq \frac{\gamma_{2s,n}}{(2R)^{n-2s}} \int_{|z| \leq R} \big( \frac 2{1+|z|^2} \big)^\sigma u^\alpha(z) dz > 0.	
\end{aligned}
\end{equation}
Putting the two estimates \eqref{eq-uu>1} and \eqref{eq-uu>2} together we arrive at
\[
u(y) \gtrsim \frac {C_2}{|y-x|^{n-2s}} \quad \text{for all } |y-x| \geq \lambda_1
\]
for some $C_2>0$. Hence, for small $\lambda_0 \ll \lambda_1$ and for $\lambda \leq \lambda_0$ we have
\begin{align*}
u_{x,\lambda} (y) = \big(\frac {\lambda}{|y-x| } \big)^{n-2s} u (y^{x,\lambda})
\leq \Big(\frac {\lambda_0}{|y-x| } \Big)^{n-2s} \sup_{B_{\lambda_1} (x)} u \leq u(y)
\end{align*}
for $|y-x| \geq \lambda_1 > 0$. This completes the proof.
	\end{proof}
		
For arbitrary but fixed point $x \in \R^n \setminus \{0\}$, we define
\begin{equation}\label{eq-lambda}
	\overline \lambda (x):= \sup\left\{ 
	\begin{aligned}
	0< \mu \leq \sqrt{1+|x|^2}: \quad & u_{x, \lambda}(y)\leq u(y) \quad \text{for all } \; |y-x| \geq \lambda >0\\
	& \text{and for all } \; 0< \lambda < \mu 
	\end{aligned}
	\right\}.
\end{equation}
(It is worth noting that in the definition of $\overline\lambda (x)$, we require $\mu \leq \sqrt{1+|x|^2}$, which is different from the standard method of moving spheres in \cite{Li}. This extra restriction can be easily understood by seeing Lemma \ref{lem-<1} below.) It follows from Lemma \ref{lem-MS-small} that $\overline \lambda(x)$ is well defined and $\overline \lambda(x) >0$. Next we will show that $\overline \lambda (x) \geq |x|$ for all $x\in \R ^n$. Before proving this, we need the following auxiliary lemma.
	
	\begin{lemma}\label{lem-<1}
		For any $ \lambda \in (0,\sqrt{1+|x|^2})$ there holds
\begin{equation}\label{eq-lambda/lambda<1} 
\frac{\lambda^2 \big(1+|z|^2\big)}{|z-x|^2 \big(1+|z^{x, \lambda}|^2\big) }< 1 
\end{equation}
		for any $z \in \R ^n$ satisfying $|z-x| > \lambda. $
	\end{lemma}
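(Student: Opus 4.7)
The plan is to establish \eqref{eq-lambda/lambda<1} via a direct algebraic identity. Clearing the positive denominator, the claim is equivalent to
$$|z-x|^2\bigl(1+|z^{x,\lambda}|^2\bigr) - \lambda^2\bigl(1+|z|^2\bigr) > 0.$$
I expect the two factors of a clean factorization to match the two hypotheses of the lemma, namely $|z-x| > \lambda$ and $\lambda < \sqrt{1+|x|^2}$, so the strategy is to identify such a factorization.

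Concretely, I would start from the definition $z^{x,\lambda} = x + \lambda^2(z-x)/|z-x|^2$, square to obtain
$$|z^{x,\lambda}|^2 = |x|^2 + \frac{2\lambda^2\langle x,\, z-x\rangle}{|z-x|^2} + \frac{\lambda^4}{|z-x|^2},$$
and multiply through by $|z-x|^2$. Next, rewrite the inner product using the polar identity $2\langle x, z-x\rangle = |z|^2 - |x|^2 - |z-x|^2$. Substituting and collecting terms should yield the intermediate identity
$$|z-x|^2\,|z^{x,\lambda}|^2 = \bigl(|x|^2 - \lambda^2\bigr)\bigl(|z-x|^2 - \lambda^2\bigr) + \lambda^2 |z|^2.$$

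Adding $|z-x|^2$ to both sides and subtracting $\lambda^2(1+|z|^2)$, the term $\lambda^2|z|^2$ cancels and what remains telescopes into
$$|z-x|^2\bigl(1+|z^{x,\lambda}|^2\bigr) - \lambda^2\bigl(1+|z|^2\bigr) = \bigl(|z-x|^2 - \lambda^2\bigr)\bigl(1 + |x|^2 - \lambda^2\bigr).$$
Both factors on the right are strictly positive: the first by the assumption $|z-x| > \lambda$, and the second precisely because $\lambda < \sqrt{1+|x|^2}$. Dividing back by $|z-x|^2(1+|z^{x,\lambda}|^2) > 0$ delivers \eqref{eq-lambda/lambda<1}. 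The only real obstacle is bookkeeping—getting the cross term $2\langle x, z-x\rangle$ into a symmetric form via the polar identity—after which the two hypotheses of the lemma emerge naturally as the two factors, retrospectively justifying why the cap $\mu \leq \sqrt{1+|x|^2}$ was imposed in the definition \eqref{eq-lambda} of $\overline\lambda(x)$.
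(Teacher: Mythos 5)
Your proposal is correct and matches the paper's proof in substance: both clear the positive denominator, expand $|z^{x,\lambda}|^2$, and reduce to the same factorization $(|z-x|^2 - \lambda^2)(1 + |x|^2 - \lambda^2) > 0$, with the two factors corresponding exactly to the two hypotheses. The only cosmetic difference is that you invoke the polar identity $2\langle x, z-x\rangle = |z|^2 - |x|^2 - |z-x|^2$ to organize the cancellation, whereas the paper simply expands $\langle z-x, x\rangle = \langle z, x\rangle - |x|^2$ and regroups; the underlying computation is identical.
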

	
	\begin{proof}
This is elementary. Indeed, let $z \in \R ^n$ with $|z -x| >\lambda$. As $z^ {x,\lambda}= x+ \lambda^2(z-x)/|z-x|^2$, the inequality \eqref{eq-lambda/lambda<1} is equivalent to
\begin{align*}
0 < & |z-x|^2 \Big( 1+ \Big| x+ \frac{\lambda^2(z-x)}{|z-x|^2} \Big|^2 \Big) - \lambda^2(1+ |z|^2) \\
= & |z-x|^2(1+|x|^2) +2\lambda^2 \langle z-x,x\rangle + \lambda^4 - \lambda^2(1+ |z|^2) \\
= & |z-x|^2(1+|x|^2) +2\lambda^2 \langle z,x\rangle -2\lambda^2 |x|^2 + \lambda^4 - \lambda^2(1+ |z|^2) \\
= & |z-x|^2 (1+|x|^2 - \lambda^2) - \lambda^2 |x|^2 + \lambda^4 - \lambda^2 \\
=& (|z-x|^2 - \lambda^2 )(1+|x|^2 - \lambda^2) .
\end{align*}
Clearly, the right most term on the preceding estimate is strictly positive, thanks to $|z-x|> \lambda$ and $\lambda^2 < 1+|x|^2$. This completes the proof.
\end{proof} 
 
For simplicity, from now on let us denote the left hand side of \eqref{eq-lambda/lambda<1} by $L(x,\lambda,z)$, namely
\begin{equation}\label{eq-L}
L(x,\lambda,z) = \frac{\lambda^2 \big(1+|z|^2\big)}{|z-x|^2 \big(1+|z^{x, \lambda}|^2\big) }.
\end{equation}

We are now in position to estimate the threshold $\overline \lambda (x)$ for any $x \in \R^n\setminus \{0\}$.

\begin{lemma}\label{LeM=002}
	Suppose that $0< s < n/2$ and $0 \leq \sigma < 2s$. Let $u \in C( \R ^n )$ be a positive solution to the integral equation \eqref{eq-MAIN-Rn-integral}. Then, there holds
\[
\overline\lambda (x) \geq |x| \quad \text{for any }x \in \R^n \setminus \{0\}. 
\]
\end{lemma}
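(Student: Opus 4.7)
The strategy is to argue by contradiction: suppose $\overline\lambda := \overline\lambda(x) < |x|$, so that in particular $\overline\lambda < \sqrt{1+|x|^2}$ strictly. By the supremum property together with continuity, $u_{x,\overline\lambda}(y) \leq u(y)$ for all $|y-x|\geq \overline\lambda$; the aim is to show that $u \geq u_{x,\lambda}$ persists for some $\lambda > \overline\lambda$, contradicting the definition of $\overline\lambda$.

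The first observation, and the workhorse of the whole argument, is that $G_{\ve,\overline\lambda,u}$ from Lemma \ref{lem-U-U} admits a clean reformulation in terms of the quantity $L(x,\overline\lambda,z)$ defined in \eqref{eq-L}. Indeed, from the identity $\lambda^2/(|z-x|^2(1+|z^{x,\lambda}|^2)) = L/(1+|z|^2)$ one gets
\begin{equation*}
G_{\ve,\overline\lambda, u}(z) = \varepsilon \Big(\frac{2}{1+|z|^2}\Big)^{2s} \big[u(z) - L^{2s}u_{x,\overline\lambda}(z)\big] + \Big(\frac{2}{1+|z|^2}\Big)^\sigma \big[u^\alpha(z) - L^\sigma u_{x,\overline\lambda}^\alpha(z)\big].
\end{equation*}
Rewriting each bracket as, for instance, $(u - u_{x,\overline\lambda}) + (1 - L^{2s}) u_{x,\overline\lambda}$ and using Lemma \ref{lem-<1} to guarantee the strict inequality $L<1$ on $\{|z-x|>\overline\lambda\}$, together with $u \geq u_{x,\overline\lambda} > 0$ and the fact that in the setting of Theorem \ref{thm-Liouville} at least one of $\varepsilon>0$ or $\sigma > 0$ holds, we conclude $G_{\ve,\overline\lambda,u}(z) > 0$ pointwise for $|z-x|>\overline\lambda$. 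Combined with the positivity of the kernel $K$ from Lemma \ref{lem-U-U}, the integral representation \eqref{eq-u-u=FG} yields the strict pointwise inequality
\begin{equation*}
u(\xi) - u_{x,\overline\lambda}(\xi) > 0 \qquad \text{for all } |\xi - x| > \overline\lambda.
\end{equation*}

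To close the contradiction, I would upgrade this strict inequality, using continuity of $\lambda \mapsto u_{x,\lambda}$, to the inequality $u \geq u_{x,\lambda}$ for some $\lambda$ slightly larger than $\overline\lambda$. This decomposes into three regimes. Near infinity, the stereographic formula \eqref{eq-u=v} pins down the sharp asymptotic $u(y) \sim 2^{(n-2s)/2} v(N)|y|^{2s-n}$, while $u_{x,\lambda}(y) \sim u(x)\lambda^{n-2s}|y|^{2s-n}$; the strict bound $\overline\lambda < \sqrt{1+|x|^2}$ leaves a uniform margin by which $u$ dominates $u_{x,\lambda}$ at infinity under a small perturbation of $\lambda$. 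On any compact annulus bounded away from the sphere $\partial B_{\overline\lambda}(x)$, the strict pointwise inequality just proved is uniform and survives a small perturbation by continuity. The remaining, delicate region is a tubular neighborhood of $\partial B_{\overline\lambda}(x)$ itself, where $u - u_{x,\overline\lambda}$ vanishes; here one runs an \emph{integral Hopf lemma} off of \eqref{eq-u-u=FG}, using the strict positivity of $G_{\ve,\overline\lambda,u}$ together with the precise singular behavior of $K(x,\overline\lambda;\xi,z)$ as $|\xi-x|\downarrow \overline\lambda$ to force a strict outward normal lower bound on $u - u_{x,\overline\lambda}$ across the sphere, which propagates the inequality for $\lambda$ nudged slightly past $\overline\lambda$.

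The main obstacle is exactly this boundary analysis on $\partial B_{\overline\lambda}(x)$, and the need to make all three of the above regimes uniform simultaneously. The structural form of $K$ and $G$ established in Lemma \ref{lem-U-U} and the stereographic asymptotics from \eqref{eq-u=v} are tailored to this purpose; beyond those inputs, the implementation follows the now-standard integral moving-spheres scheme of Li and of Jin--Li--Xiong cited in the introduction.
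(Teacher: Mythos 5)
Your overall architecture — contradiction, strict positivity of $G_{\ve,\overline\lambda,u}$ via $L<1$ from Lemma~\ref{lem-<1}, then extension of the inequality past $\overline\lambda$ by splitting $\R^n$ into a far region, a middle annulus, and a neighborhood of $\partial B_{\overline\lambda}(x)$ — matches the paper's. But two of your three regimes have genuine gaps as sketched.

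First, the near-infinity argument. You claim the stereographic asymptotics together with $\overline\lambda < \sqrt{1+|x|^2}$ yield a uniform margin. In fact they give
\[
\lim_{|y|\to\infty} |y|^{n-2s}\big(u(y) - u_{x,\overline\lambda}(y)\big) = 2^{\frac{n-2s}{2}} v(N) - u(x)\,\overline\lambda^{\,n-2s},
\]
and after substituting $u(x) = \big(\tfrac{2}{1+|x|^2}\big)^{\frac{n-2s}{2}} v(\pi_N^{-1}(x))$, the bound $\overline\lambda^2 < 1+|x|^2$ only shows this limit exceeds $2^{(n-2s)/2}\big(v(N) - v(\pi_N^{-1}(x))\big)$, whose sign is unknown at this stage (we are trying to \emph{prove} $v$ is constant). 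The paper avoids this by applying Fatou's lemma to the integral representation \eqref{eq-u-u=FG}, obtaining
\[
\liminf_{|y|\to\infty}|y-x|^{n-2s}(u - u_{x,\overline\lambda})(y) \geq \gamma_{2s,n}\int_{|z-x|\geq\overline\lambda}\Big(1 - \big(\tfrac{\overline\lambda}{|z-x|}\big)^{n-2s}\Big) L_{\ve,\overline\lambda,x}(z)\,dz > 0,
\]
where positivity of $L_{\ve,\overline\lambda,x}$ comes from $L<1$. This is the correct route; your raw asymptotics do not suffice.

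Second, the boundary regime. Your plan is to prove a Hopf-type lower bound $u - u_{x,\overline\lambda} \geq c\,(|\xi-x|-\overline\lambda)$ near $\partial B_{\overline\lambda}(x)$, then propagate to $\lambda$ slightly larger by continuity. The difficulty is that $|u_{x,\overline\lambda} - u_{x,\lambda}| = O(\lambda-\overline\lambda)$ on the relevant annulus, and for $|\xi-x| \geq \lambda$ the Hopf margin is only $c(|\xi-x|-\overline\lambda) \geq c(\lambda-\overline\lambda)$. Both the margin and the perturbation are of the same order $O(\lambda-\overline\lambda)$, so the sign of the sum hinges on a comparison of constants you have not controlled. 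The paper sidesteps this by estimating $u - u_{x,\lambda}$ \emph{directly}, splitting the $z$-integral as $I+II$: the near term $I$ has integrand $H_{\ve,\lambda,x}$ of size $O(\delta_2)$ (where $\lambda - \overline\lambda \leq \delta_2$) and $\int K\,dz = O(|y-x|-\lambda)$, giving $I \geq -C\delta_2(|y-x|-\lambda)$, while the far term $II$ (over a fixed ring where $u - u_{x,\lambda}$ is bounded below by the already-established outside estimate) contributes $\geq C'(|y-x|-\lambda)$ with $C'>0$ independent of $\delta_2$. Choosing $\delta_2$ small makes $I+II \geq 0$. The crucial structural point your sketch misses is that both terms must vanish linearly at $|y-x|=\lambda$, not at $|y-x|=\overline\lambda$, which forces working with $u - u_{x,\lambda}$ rather than perturbing a bound on $u - u_{x,\overline\lambda}$.
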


\begin{proof}
By way of contradiction, we suppose $\overline\lambda (x) \neq |x|$ for some $x \in \R^n \setminus \{0\}$. Then by the definition in \eqref{eq-lambda} we must have
\[
\overline\lambda (x) < |x|.
\]
For simplicity, let us write
\[
\overline\lambda =\overline\lambda (x) \quad \text{and} \quad \overline\delta =\min \big\{ 1, \frac{|x| - \overline\lambda }{2} \big\} .
\] 
Clearly, $\delta > 0$. Still by the definition of $\overline\lambda $, see \eqref{eq-lambda}, we have 
	\begin{equation}\label{5CL6-01}
		u_{x, \overline\lambda } (y) \leq u(y) \quad \text{for all $y$ satisfying } |y - x|\geq \overline\lambda . 
	\end{equation} 
However, we shall prove that 
\[
u_{x, \lambda } (y) \leq u(y) \quad \text{for any }|y - x|\geq \lambda
\] 
for some $\lambda$ slightly bigger than $\overline\lambda $. If this is true, then this violates the definition of $\overline\lambda $ in \eqref{eq-lambda}. Hence, we must have $\overline\lambda (x) \geq |x|$ as claimed. The proof consists of two steps as follows.

\noindent\textbf{Estimate of $u - u_{x, \lambda}$ outside the ball $B(x, \overline\lambda +\overline\delta)$}. To estimate $u - u_{x, \lambda}$ we first estimate $u - u_{x, \overline\lambda }$. In the region $|y-x| \geq \overline\lambda +\overline\delta$, we first claim that there exists $C_3 \in (0, 1)$ such that 
	\begin{equation}\label{Lhhdy6-03=001}
		(u - u_{x, \overline\lambda }) (y) \geq \frac{C_3}{|y -x|^{n-2 s}} \quad\text{for all } \; |y -x |\geq \overline\lambda +\overline\delta . 
	\end{equation}
To this purpose, we start from making use of the identity \eqref{eq-u-u=FG} in Lemma \ref{lem-U-U} to obtain
\begin{align*}
&(u - u_{x,\overline\lambda }) (y) \\
&= \gamma_{2s,n} \int_{|z-x| \geq \overline\lambda } K(x, \overline\lambda; y, z) \left(
	\begin{aligned}
	 &\varepsilon \big(\frac{2}{1+|z|^2}\big)^{2s} u(z)
	+\big( \frac {2}{1+|z|^2} \big)^\sigma \; u^\alpha (z)\\
	&-\varepsilon \big(\frac{2\overline\lambda ^2} {1+|z^{x,\overline\lambda }|^2}\big)^{2s} \frac{1}{|z-x|^{4s}}\; u_{x,\overline\lambda }(z) \\
	&-\big( \frac {2 \overline\lambda ^2}{1+|z^{x,\overline\lambda }|^2} \big)^\sigma \frac{1}{|z-x|^{2 \sigma}} \; u_{x, \overline\lambda }^{\alpha}(z)
	\end{aligned} \right) dz. 
\end{align*} 
We now examine the terms in the parentheses in the preceding identity. With help of \eqref{5CL6-01} and \eqref{eq-lambda/lambda<1} we observe that
\begin{equation}\label{eq-uuuu-1}
\begin{aligned}
\big(\frac{2}{1+|z|^2}\big)^{2s} u(z)
&-\big(\frac{2\overline\lambda ^2} {1+|z^{x,\overline\lambda }|^2}\big)^{2s} \frac{1}{|z-x|^{4s}} u_{x,\overline\lambda }(z) \\
& = \big(\frac{2}{1+|z|^2}\big)^{2s} \Big[ u(z) - L(x,\overline\lambda,z)^{2s} u_{x,\overline\lambda }(z) \Big] \\
& \geq \big(\frac{2}{1+|z|^2}\big)^{2s} \Big[ 1- L(x,\overline\lambda,z)^{2s} \Big] u_{x,\overline\lambda }(z) > 0
\end{aligned}
\end{equation} 
and that
\begin{equation}\label{eq-uuuu-2}
\begin{aligned}
\big(\frac{2}{1+|z|^2}\big)^\sigma u^\alpha (z)
&-\big(\frac{2\overline\lambda ^2} {1+|z^{x,\overline\lambda }|^2}\big)^\sigma \frac{1}{|z-x|^{2\sigma}} u_{x,\overline\lambda }^\alpha (z) \\
& = \big(\frac{2}{1+|z|^2}\big)^\sigma \Big[ u^\alpha (z) -L(x,\overline\lambda,z)^\sigma u_{x,\overline\lambda }^\alpha (z) \Big] \\
& \geq \big(\frac{2}{1+|z|^2}\big)^\sigma \Big[ 1-L(x,\overline\lambda,z)^\sigma \Big] u_{x,\overline\lambda }^\alpha (z) \geq 0.
\end{aligned}
\end{equation} 
Here, $L(x,\overline\lambda,z)$ is defined as in \eqref{eq-L}. Therefore,
\begin{equation}\label{eq-u-u>=}
\begin{aligned}
 (u - u_{x,\overline\lambda }) (y) 
\geq \gamma_{2s,n} \int_{|z-x| \geq \overline\lambda } K(x, \overline\lambda; y, z) L_{\ve, \overline \lambda,x}(z) dz,
\end{aligned}
\end{equation} 	
where
\begin{align*}
L_{\ve, \overline \lambda,x}(z) = & \varepsilon \big(\frac{2}{1+|z|^2}\big)^{2s} \big[ 1- L(x,\overline\lambda,z)^{2s} \big] u_{x,\overline\lambda }(z) \\
& + \big(\frac{2}{1+|z|^2}\big)^\sigma \big[ 1- L(x,\overline\lambda,z)^\sigma \big] u_{x,\overline\lambda }^\alpha (z)	.
\end{align*}
Hence, under the condition that either $\ve >0$ if $\alpha = (n+2s)/(n-2s)$ or $\ve \geq 0$ if $\alpha < (n+2s)/(n-2s)$ we deduce that
\[
L_{\ve, \overline \lambda,x}(z) > 0 \quad \text{for all } \; |z-x | \geq \overline \lambda,
\]
which tells us
\[
(u - u_{x,\overline\lambda }) (y) > 0 \quad \text{for all } \; |y-x | \geq \overline \lambda.
\]
This together with the Fatou lemma yields
\begin{align*}
\liminf_{|y| \nearrow \infty} & \big[ |y-x|^{n-2s} (u - u_{x,\overline\lambda }) (y) \big] \\
=&\gamma_{2s,n} \liminf_{|y| \nearrow \infty} \int_{|z-x| \geq \overline\lambda} |y-x|^{n-2s} K(x, \overline\lambda; y, z) L_{\ve, \overline \lambda,x}(z) dz\\
 \geq& 	 \gamma_{2s,n} \int_{|z-x| \geq \overline\lambda} \Big( 1 - \Big(\frac{\overline \lambda}{|z-x|} \Big)^{n-2s} \Big) L_{\ve, \overline \lambda,x}(z) dz \\
> & 0.	
\end{align*}
Thus, there is some small $C_4 > 0$ and some large $R \geq \overline\delta$ such that
\[
(u - u_{x, \overline\lambda }) (y) \geq \frac{C_4}{|y -x|^{n-2 s}} \quad\text{for all } \; |y -x |\geq \overline\lambda + R. 
\]
To gain \eqref{Lhhdy6-03=001}, it suffices to show that $u - u_{x, \overline\lambda }$ is bounded from below away from zero in the ring $\{ y : \overline\lambda +\overline\delta \leq |y-x| \leq \overline\lambda + R\}$. By the positivity of the kernel $K(x,\overline\lambda; y,z)$ in the region $|y-x| > \overline\lambda$ and $|z-x| > \overline\lambda$, see Lemma \ref{lem-U-U}, there is some small $C_5>0$ such that
\[
K(x, \overline\lambda; y ,z) \geq C_5
\]
for all $y$ and $z$ satisfying
\[ 
\overline\lambda +\overline\delta \leq |y-x| \leq \overline\lambda + R < 2(\overline\lambda + R) \leq |z-x| \leq 4(\overline\lambda + R). 
\]	
Using this and \eqref{eq-u-u>=} we can estimate $u - u_{x,\overline\lambda }$ from the below as follows
\begin{align*}
 (u- u_{x,\overline\lambda }) (y) 
\geq \gamma_{2s,n} C_5 \int_{2(\overline\lambda + R) \leq |z-x| \leq 4(\overline\lambda + R)} L_{\ve, \overline \lambda,x}(z) dz
= C_6
\end{align*}
for some $C_6 >0$ which possibly depends only on $x$. Putting the above estimates together we arrive at \eqref{Lhhdy6-03=001} for some $C_3$ depending on $C_4$ and $C_6$. We are now in position to estimate $u - u_{x, \lambda}$ in the region $|y-x| \geq \overline\lambda +\overline\delta$ but this requires $\lambda$ closed to $\overline \lambda$, say $\overline\lambda \leq \lambda \leq \overline\lambda +\delta_1$ for some small $\delta_1 \in (0,\overline\delta)$. Indeed, recall
\[
u_{x,\lambda} (y) = \big( \frac{\lambda}{|y-x|} \big)^{n-2s} u(y^{x,\lambda}).
\]
Hence, by continuity, there is some small $\delta_1 \in (0, \overline\delta)$ such that
\[
( u_{x, \overline\lambda } - u_{x, \lambda}) (y) \geq -\frac{C_3}{2} \frac 1{|y -x|^{n-2s}}
\]
for all $\overline\lambda \leq \lambda \leq \overline\lambda +\delta_1$ and all $|y-x| \geq \overline \lambda + \overline \delta$. Here the constant $C_3$ is as in \eqref{Lhhdy6-03=001}. This and \eqref{Lhhdy6-03=001} helps us to conclude that
\begin{equation}\label{eq-outside}
 	(u - u_{x, \lambda}) (y) =(u - u_{x, \overline\lambda}) (y)+ (u_{x,\overline\lambda} - u_{x, \lambda}) (y) \geq \frac{C_3}{2|y -x|^{n-2s}}
\end{equation}
for all $y$ satisfying $|y -x |\geq \overline\lambda + \overline\delta $ and all $\lambda$ satisfying $\overline\lambda \leq \lambda \leq \overline\lambda +\delta_1$.

\noindent\textbf{Estimate of $u - u_{x, \lambda}$ inside the ball $B(x, \overline\lambda +\overline\delta)$}. From now on, we always assume $\overline\lambda \leq \lambda \leq \overline\lambda +\delta_1$ where the constant $\delta_1$ is found in the previous step. We shall obtain \eqref{eq-inside}. Making use of \eqref{eq-u-u=FG}, \eqref{eq-uuuu-1}, and \eqref{eq-uuuu-2} we get 
\begin{align*}
(u - u_{x,\lambda}) (y) & \geq \gamma_{2s,n} \int_{|z-x| \geq \lambda} K(x, \lambda; y, z) H_{\ve, \lambda, x}(z)dz\\
&=\gamma_{2s,n} \Big( \int_{ \overline\lambda +\overline\delta \geq |z-x| \geq \lambda} + \int_{ |z-x| \geq \overline\lambda +\overline\delta } \Big) K(x, \lambda; y, z) H_{\ve, \lambda, x}(z) dz
\end{align*}
with
\begin{align*}
H_{\ve, \lambda, x}(z) =\varepsilon \big(\frac{2}{1+|z|^2}\big)^{2s} \big(u(z)-u_{x, \lambda}(z)\big) 
	+\Big( \frac {2}{1+|z|^2} \Big)^\sigma \big(u^\alpha (z)-u_{x, \lambda}^{\alpha}(z)\big) .
\end{align*}
Thanks to \eqref{eq-outside} we know that $u \geq u_{x,\lambda}$ in the region $|z-x| \geq \overline\lambda +\overline\delta $ and $\overline \lambda \leq \lambda \leq \overline \lambda + \delta_1$. This yields
\[
\int_{ |z-x| \geq \overline\lambda +\overline\delta} K(x, \lambda; y, z) H_{\ve, \lambda, x}(z) dz \geq 
\int_{\overline\lambda + 3 \geq |z-x| \geq \overline\lambda + 2} K(x, \lambda; y, z) H_{\ve, \lambda, x}(z) dz.
\]
Therefore, we can further estimate $u - u_{x,\lambda}$ as follows
\begin{align*}
(u - u_{x,\lambda}) (y) &\geq \gamma_{2s,n} \Big( \int_{ \overline\lambda +\overline\delta \geq |z-x| \geq \lambda} + \int_{\overline\lambda + 3 \geq |z-x| \geq \overline\lambda + 2} \Big) K(x, \lambda; y, z) H_{\ve, \lambda, x}(z) dz \\
& = \gamma_{2s,n} ( I + II ).
\end{align*}
Hence for some sufficiently small $\delta_2 \in (0, \delta_1)$ to be specified later we shall show that
\[
I + II \geq 0 \quad \text{for all } \lambda \leq |y-x| \leq \overline\lambda + \overline\delta, \; \overline\lambda \leq \lambda \leq \overline\lambda + \delta_2;
\]
see \eqref{eq-inside}. To see this, we estimate $I$ and $II$ term by term.

\noindent\textit{Estimate of $I$}. Using the smoothness of $u$, there exists some $C_7>0$ independent of $\delta_2 $ such that 
\begin{align}\label{eq-estimate-U-U}
\max \big\{ | u_{x, \overline\lambda }(z)- u_{x,\lambda}(z) | , | u^{\alpha}_{x, \overline\lambda }(z)- u^{\alpha}_{x,\lambda}(z) | \big\} \leq C_5 (\lambda - \overline\lambda ) \leq C_7\delta_2 
\end{align}
for all $\lambda \leq |z-x| \leq \overline\lambda + \overline\delta$ and all $\overline\lambda \leq \lambda \leq \overline\lambda +\delta_2 $. Besides, in view of \eqref{eq-K} we have the following estimate
\begin{align}\label{eq-estimate-K1}
\int_{\lambda \leq |z-x| \leq \overline\lambda +\overline\delta } & K(x,\lambda; y, z) dz 
\leq C_8 (|y-x| - \lambda) 
\end{align}
for some $C_8>0$; see Appendix \ref{apd-estimate-kernel}. As $0 \leq |z-x| -\lambda \leq \delta_2$, we obtain from \eqref{eq-estimate-K1} the following
\begin{align}\label{eq-estimate-K2}
\int_{ \lambda \leq |z-x| \leq \lambda +\delta_2 } K(x, \lambda; y, z) \big(|z-x| -\lambda\big) dz 
\leq C_8 \delta_2 (|y-x| - \lambda).
\end{align}
With help of \eqref{eq-estimate-U-U}, \eqref{eq-estimate-K1}, and \eqref{eq-estimate-K2} we are able to estimate $I$ as follows
\begin{align*}
I &= \int_{ \overline\lambda +\overline\delta \geq |z-x| \geq \lambda} K(x, \lambda; y, z) H_{\ve, \lambda, x}(z) dz 
 \geq - C_7C_8 \delta_2 (|y-x| - \lambda) .
\end{align*}

\noindent\textit{Estimate of $II$}. By \eqref{eq-outside}, there is some $C_9>0$ such that
\[
\max \big\{ u (z) - u_{x, \lambda} (z) , u^\alpha (z) - u_{x, \lambda}^\alpha (z) \big\} \geq C_9
\]
for any $z$ satisfying $\overline\lambda +2 \leq |z-x| \leq \overline\lambda +3$ and any $\overline\lambda \leq \lambda \leq \overline\lambda +\delta_2$. This leads to
\begin{align*}
II &= \int_{\overline\lambda + 3 \geq |z-x| \geq \overline\lambda + 2} K(x, \lambda; y, z) H_{\ve, \lambda, x}(z) dz \\
& \geq C_9 \int_{\overline\lambda + 3 \geq |z-x| \geq \overline\lambda + 2} K(x, \lambda; y, z) \Big(
\varepsilon \big(\frac{2}{1+|z|^2}\big)^{2s} +\big( \frac {2}{1+|z|^2} \big)^\sigma \Big) dz .
\end{align*}
Observe from \eqref{eq-K} that $K(x, \lambda; y ,z) =0$ for $y \in \partial B_\lambda(x)$ and that
\[
\langle \nabla_y K(x, \lambda; y, z), y-x \rangle \big|_{|y-x|=\lambda} = (n-2s) \frac{|z - x|^2 - |y -x|^2}{|y -z|^{n+2-2s}} >0 
\]
for all $\overline\lambda + 2 \leq |z-x| \leq \overline\lambda +3$. Therefore, there is some $C_8>0$ independent of $\delta_2$ such that
\[
K(x, \lambda; y ,z) \geq C_{10} (|y-x| - \lambda) \quad \text{for all } \overline\lambda + 2 \leq |z-x| \leq \overline\lambda +3.
\]
Putting the above estimates together we arrive at
\begin{align*}
II \geq C_7 C_8 &\int_{\overline\lambda + 3 \geq |z-x| \geq \overline\lambda + 2} \big( \frac {2}{1+|z|^2} \big)^\sigma dz .
\end{align*}

We are now in position to combine the two estimates for $I$ and $II$.
It follows from the above that for $\overline\lambda \leq \lambda \leq \overline\lambda + \delta_2$ and for $\lambda \leq |y -x| \leq \overline\lambda + \overline\delta $, 
\begin{equation}\label{eq-inside}
\begin{aligned}
\frac{(u - u_{x,\lambda}) (y) }{\gamma_{2s,n}} &=I+II\\
& \geq \Big( -C_7 C_8 \delta_2 + C_9 C_{10} \int_{\overline\lambda + 2 \leq |z -x| \leq \overline\lambda +3} \big(\frac{ 2 }{1+|z|^2}\big)^{\sigma} dz \Big) ( |y -x| -\lambda) .
\end{aligned}
\end{equation}
In view of \eqref{eq-inside} if we choose $\delta_2$ sufficiently small, then the right most hand side of \eqref{eq-inside} is positive. Together with the estimate in \eqref{eq-outside} we deduce that $(u - u_{x,\lambda}) (y) >0$. This contradicts the definition of $\overline\lambda $ in \eqref{eq-lambda}. This completes the proof of Lemma \ref{LeM=002}. 
\end{proof}

After proving Lemma \ref{LeM=002}, we have the following remarks.
\begin{remark}\label{rmk-1}
It is worth noting that the condition $x \in \R^n \setminus \{0\}$ is crucially used in the proof of Lemma \ref{LeM=002}, and it is not clear if $\overline\lambda (x)=|x|$ or not. In fact, we would like to know the exact value of $\overline \lambda (x)$ at any point $x \in \R^n \setminus \{0\}$. It turns out that with help of Theorem \ref{thm-Liouville} we are able to compute the exact value of $\overline \lambda (x)$ at any point $x \in \R^n\setminus \{0\}$; see Appendix \ref{apd-lambda}.
\end{remark}

Finally, we are in position to complete our proof of Theorem \ref{thm-Liouville}.

\begin{lemma}\label{lem-MS-stop}
The function $u$ is radially symmetric about the origin. Consequently, the function $v$ must be constant.
\end{lemma}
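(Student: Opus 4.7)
The plan is to deduce radial symmetry of $u$ about the origin from the conclusion $\overline\lambda(x)\ge|x|$ of Lemma \ref{LeM=002}, and then to upgrade this into the constancy of $v$ by exploiting the invariance of \eqref{eq-MAIN} under isometries of $\S^n$. The key geometric observation is that whenever $|p|=|q|$ one can construct an admissible inversion sphere $\partial B_\lambda(x)$, i.e.\ one with $\lambda\le|x|\le\overline\lambda(x)$, that swaps $p$ and $q$, so that the pointwise inequality provided by the moving-spheres procedure can be applied at these two points.

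Concretely, I would fix two distinct points $p,q\in\R^n$ with $|p|=|q|=r>0$, write $m=(p+q)/2$ and $d=(q-p)/2$, and use that $|p|=|q|$ forces the orthogonality $m\cdot d=0$. For each parameter $s>1$, set $x=m+sd$ and $\lambda=|d|\sqrt{s^2-1}$. A direct computation gives $|p-x|=(s+1)|d|$, $|q-x|=(s-1)|d|$, hence $\lambda^2=|p-x|\,|q-x|$, $p^{x,\lambda}=q$, and
\[
\Big(\frac{\lambda}{|p-x|}\Big)^{n-2s}=\Big(\frac{s-1}{s+1}\Big)^{(n-2s)/2}.
\]
The orthogonality $m\perp d$ gives $|x|^2=|m|^2+s^2|d|^2=\lambda^2+r^2$, so that $\lambda<|x|\le\overline\lambda(x)$ by Lemma \ref{LeM=002}. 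The definition \eqref{eq-lambda} of $\overline\lambda(x)$ then supplies $u_{x,\lambda}(p)\le u(p)$, which reads
\[
\Big(\frac{s-1}{s+1}\Big)^{(n-2s)/2}u(q)\le u(p).
\]
Letting $s\to\infty$ and using $(n-2s)/2>0$ yields $u(q)\le u(p)$. Replacing $x$ by $x'=m-sd$ (which is now closer to $p$) and repeating the argument at $q$ in place of $p$ gives the reverse inequality, so $u(p)=u(q)$; this establishes radial symmetry of $u$ about the origin.

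In the last step I would pass from $u$ to $v$ using that $\gjms$ commutes with isometries of $\S^n$ and $\Q$ is a constant, so \eqref{eq-MAIN} is preserved under composition with any isometry $R$ of $\S^n$. Hence $v\circ R$ is again a non-negative smooth solution of \eqref{eq-MAIN}; applying the previous paragraph to the function on $\R^n$ associated with $v\circ R$ through \eqref{eq-u=v} shows that $v$ is axially symmetric about the axis through $R^{-1}(N)$. As $R$ sweeps the isometry group of $\S^n$, the point $R^{-1}(N)$ sweeps out all of $\S^n$, so $v$ is axially symmetric about every axis and is therefore constant. The main technical point is the geometric step: one has to produce, for arbitrary $p\ne q$ with $|p|=|q|$, an inversion sphere that swaps $p$ and $q$ and simultaneously meets the admissibility condition $\lambda\le|x|$ of the moving-spheres setup. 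The orthogonality $m\perp d$ that is automatically forced by $|p|=|q|$ is exactly what keeps $\lambda<|x|$ for every $s>1$ and so allows Lemma \ref{LeM=002} to be brought to bear.
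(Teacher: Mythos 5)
Your proof is correct, and for the radial symmetry step it takes a genuinely different route from the paper's. The paper follows the standard reflection-in-the-limit version of the moving-spheres argument: it fixes a unit direction $\vec e$ with $\langle y - a\vec e, \vec e\rangle \leq 0$, centers the inversion sphere at $x = R\vec e$ with $\lambda = R - a$ (so $0 < \lambda < |x| = R$ and Lemma \ref{LeM=002} applies), lets $R \to \infty$ to turn the Kelvin inequality into the reflection inequality $u(y) \geq u\big(y - 2(\langle y, \vec e\rangle - a)\vec e\big)$, then sends $a \searrow 0$ and varies $\vec e$ to conclude $u(y) = u(z)$ whenever $|z|=|y|$. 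You instead build, for each pair $p\neq q$ on a sphere $|p|=|q|=r$, an explicit one-parameter family of admissible inversion spheres that directly swap $p$ and $q$: with $m=(p+q)/2$, $d=(q-p)/2$ you take $x=m+\tau d$, $\lambda=|d|\sqrt{\tau^2-1}$, and the orthogonality $m\perp d$ forced by $|p|=|q|$ gives $|x|^2=\lambda^2+r^2>\lambda^2$, so admissibility again follows from Lemma \ref{LeM=002}; letting $\tau\to\infty$ removes the conformal factor $((\tau-1)/(\tau+1))^{(n-2s)/2}$. Both routes rest entirely on Lemma \ref{LeM=002} and a limiting procedure, and are about equally elementary; yours makes the swap of two specific equal-radius points and the admissibility check completely transparent, while the paper's is the more familiar presentation and yields the useful auxiliary reflection inequality along the way. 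The final step (radial $u$ implies constant $v$) is the same in both: the paper appeals to the arbitrariness of the stereographic pole, which is exactly the isometry invariance of \eqref{eq-MAIN} that you spell out. One caution for a write-up: you reuse the letter $s$ both for the order of $\gjms$ and for the free parameter in your family of spheres; choose a different symbol for the latter (as I did above with $\tau$) so the exponent $(n-2s)/2$ is unambiguous.
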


\begin{proof}
	Using Lemma \ref{LeM=002}, we obtain for every $x\in \R ^n \setminus \{0\}$, 
	\begin{equation}\label{SS555}
		u_{x,\lambda}(y) \leq u(y) \quad \text{for all } \; |y - x| \geq \lambda, \; 0< \lambda< |x|. 
	\end{equation}
Let $y\in \R^n \setminus \{0\}$	and $a >0$ be arbitrary but fixed. Let $\vec e \in \R^n$ be any unit vector such that
\begin{equation}\label{eq-vector}
\langle y - a \vec e , \vec e\rangle \leq 0.
\end{equation}
For any number $R >a$ if we set $\lambda = R - a$ and $x=R\vec e$, then $0<\lambda< |x|$ and with help of \eqref{eq-vector} we get
\[
|y-x|^2 = |y - a \vec e -\lambda \vec e|^2 =\lambda^2 + |y- a \vec e |^2 -2\lambda\langle y - a \vec e , \vec e\rangle \geq \lambda^2.
\]
Therefore, we can apply \eqref{SS555} to get
\begin{align*}
	u(y)\geq u_{x,\lambda}(y) & =\big( \frac{\lambda}{|y - x|}\big)^{n-2s} u\big( x + \frac{\lambda^2(y -x)}{|y -x|^2} \big) \\
	& = \big(\frac{R-a}{|y-R\vec e|}\big)^{n-2s} u\big(R\vec e+\frac{(R-a)^2(y-R\vec e)}{|y-R\vec e|^2}\big).
\end{align*}
Notice that	
\[
	\aligned
	R\vec e+\frac{(R-a)^2(y-R\vec e)}{|y-R\vec e|^2} &= \frac{R|y-R\vec e|^2e+(R-a)^2(y-R\vec e)}{|y-R\vec e|^2}\\
	&=
	 \frac{R(|y|^2-2R\langle y,\vec e\rangle+R^2)\vec e+ (R^2-2Ra+a^2)(y-R\vec e)}{|y-R\vec e|^2}
	 \endaligned
\]
Hence, by letting $R$ to infinity, we get
\[
R\vec e+\frac{(R-a)^2(y-R\vec e)}{|y-R\vec e|^2} \to y-2(\langle y, \vec e \rangle-a)\vec e .
\]
Obviously,
	 $$
	 \aligned
	 \big(\frac{R-a}{|y-R\vec e|}\big)^{n-2s} \to 1
	 \endaligned
	 $$
as $R$ goes to infinity. Hence, by the continuity of $u$, we arrive at
	 \begin{equation}\label{SS666}
	 	u(y) \geq u(y - 2( \langle y, \vec e \rangle - a )\vec e ).
	 \end{equation}
Since the inequality \eqref{SS666} above holds for arbitrary $a>0$, we let $a \searrow 0$ to get 
\[
u(y)\geq u(-y) \quad \text{for all } \; y \in \R^n \setminus \{0\}.
\] 
Since $y \in \R^n\setminus \{0\}$ is arbitrary, it follows from the preceding inequality that $u$ is radially symmetric about the origin. Having the symmetry of $u$ one can quickly conclude that $v$ must be constant. Indeed, using the relation
\[
		u = \big( \frac 2{1+|x|^2} \big)^{\frac {n-2s}2} (v \circ \pi_N^{-1}) 
		\quad \text{in } \R^n
\]
we know that the function $v$ depends only the last coordinate $x_{n+1}$ in $\R^{n+1}$. However, as the $x_{n+1}$-axis is arbitrarily chosen, the function $v$ must be constant. This completes the proof.
\end{proof}

 
\section{Application to the sharp critical fractional Sobolev inequality}
\label{sec-Sobolev}

This section is devoted to a proof of Theorem \ref{thm-Sobolev} which concerns a sharp subcritical/critical Sobolev inequality \eqref{eq-sSobolev} for non-negative functions, namely
\[
\int_{\S^n} v \gjms (v) d\mu_{g_{\S^n}}
\geq \frac{\Gamma (n/2 + s)}{\Gamma (n/2 - s )} | \S^n|^\frac{\alpha-1}{\alpha+1}
\Big( \int_{\S^n} v^{\alpha+1} d\mu_{g_{\S^n}} \Big)^\frac{2}{\alpha+1}
\] 
with $0<\alpha \leq (n+2s)/(n-2s)$. We note by the H\"older inequality that
\[
\int_{\S^n}v^{\alpha+1} d\mu_{g_{\S^n}}
\leq |\S^n|^{1-\frac{(\alpha+1)(n-2s)}{2n}} 
\Big(  \int_{\S^n}v^\frac{2n}{n-2s} d\mu_{g_{\S^n}}  \Big)^\frac{(\alpha+1)(n-2s)}{2n},
\]
which implies
\[
| \S^n|^\frac{\alpha-1}{\alpha+1}
\Big( \int_{\S^n} v^{\alpha+1} d\mu_{g_{\S^n}} \Big)^\frac{2}{\alpha+1}
\leq  | \S^n|^\frac{2s}{n} 
\Big( \int_{\S^n} v^\frac{2n}{n-2s} d\mu_{g_{\S^n}} \Big)^\frac{n-2s}{n}.
\] 
Hence, the subcritical case of \eqref{eq-sSobolev} can be derived directly from the critical case of \eqref{eq-sSobolev}. Of course, it is clear that equality in \eqref{eq-sSobolev} occurs if $v$ is any constant function. Therefore, in the rest of this section, it suffices to investigate the critical case of \eqref{eq-sSobolev}, namely we shall prove the following sharp inequality
\[
\int_{\S^n} v \gjms (v) d\mu_{g_{\S^n}}
\geq \frac{\Gamma (n/2 + s)}{\Gamma (n/2 - s )} | \S^n|^\frac{2s}{n} 
\Big( \int_{\S^n} v^\frac{2n}{n-2s} d\mu_{g_{\S^n}} \Big)^\frac{n-2s}{n}.
\] 
Set
\[
\mathcal S = \Q |\S^n|^{2s/n} .
\]
Since we shall make use of a limit process, it is freely to consider $\ve \in (0, 1)$. Now we consider the  variational problem \eqref{eq-SS-O}, i.e.
\[
\mathcal S_\ve = \inf_{v \in \mathcal W }\int_{\S^n} \big[ v \gjms (v) - \ve \Q v^2 \big] d\mu_{g_{\S^n}}
\]
within the set
\[
\mathcal W =\Big\{0\leq v \in H^s (\S^n) : \int_{\S^n} v^\frac{2n}{n-2s} d\mu_{g_{\S^n}} = 1 \Big\}.
\]
The set $\mathcal W$ is not empty because $|\S^n|^{-\frac{n-2s}{2n}} \in \mathcal W$. Besides, we also have
\[
|\S^n|^{-\frac{n-2s}{2n}} \gjms (|\S^n|^{-\frac{n-2s}{2n}}) - \ve \Q |\S^n|^{-\frac{n-2s}{n}} =(1- \ve )\Q |\S^n|^{-\frac{n-2s}{n}} \ne 0,
\]
which helps us to conclude from \eqref{eq-SS-O} that
\[
\mathcal S_\ve \leq (1- \ve )\Q |\S^n|^{2s/n}<+\infty,
\] 
however, $\mathcal S_\ve $ could be $-\infty$. Eventually we are able to show that $\mathcal S_\ve = (1- \ve )\Q |\S^n|^{2s/n}$, but at the moment, we show that $\mathcal S_\ve$ is finite and is achieved by some smooth non-negative function $v_\ve$. 

\begin{lemma}\label{lem-SSexistence}
The constant $\mathcal S_\ve$ in \eqref{eq-SS-O} is finite and there exists some non-negative $v_\ve \in C^\infty (\S^n)$ such that 
\[
\int_{\S^n} \big[ v_\ve \gjms (v_\ve) - \ve \Q v_\ve^2 \big] d\mu_{g_{\S^n}} = \mathcal S_\ve \Big( \int_{\S^n} v_\ve^\frac{2n}{n-s} d\mu_{g_{\S^n}} \Big)^\frac{n-2s}{n}.
\]
In particular, $v_\ve$ solves \eqref{EqVe}, i.e. 
\[
\gjms (v_\ve) = \ve \Q v_\ve +\mathcal S_\ve v_\ve^\frac{n+2s}{n-2s}  \quad \text{on $\S^n$}.
\]
\end{lemma}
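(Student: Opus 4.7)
The plan is to solve the constrained minimization problem \eqref{eq-SS-O} by the direct method, with the critical Sobolev exponent handled via concentration-compactness. For the finiteness of $\mathcal S_\ve$, I start from the spectral identity \eqref{ProjectGJMS}: since $\alpha_{2s,n}(l)$ is strictly increasing in $l$ with $\alpha_{2s,n}(0)=\Q$, for every $\ve\in(0,1)$ and every $v\in H^s(\S^n)$ one has
\begin{equation*}
\int_{\S^n}\bigl[v\gjms(v)-\ve\Q v^2\bigr]\, d\mu_{g_{\S^n}} \;\geq\; (1-\ve)\int_{\S^n} v\gjms(v)\, d\mu_{g_{\S^n}}.
\end{equation*}
Combined with the (not-yet-sharp) continuous embedding $H^s(\S^n)\hookrightarrow L^{2n/(n-2s)}(\S^n)$, this gives $\mathcal S_\ve\geq (1-\ve)C_0>0$ for some $C_0>0$, so the infimum is finite and positive.

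The main obstacle is producing a minimizer at the critical exponent. I pick a non-negative minimizing sequence $\{v_k\}\subset\mathcal W$; by the lower bound above it is bounded in $H^s(\S^n)$, so, up to a subsequence, $v_k\rightharpoonup v_\ve$ weakly in $H^s$, $v_k\to v_\ve$ in every $L^p$ with $p<2n/(n-2s)$ by Rellich--Kondrachov, and $v_k\to v_\ve$ a.e., in particular $v_\ve\geq 0$. The delicate point is strong convergence in $L^{2n/(n-2s)}$. Testing the functional against the constant $v_0=|\S^n|^{-(n-2s)/(2n)}\in\mathcal W$, for which $\gjms(v_0)=\Q v_0$, gives the crucial strict upper bound
\begin{equation*}
\mathcal S_\ve \;\leq\; \int_{\S^n}\bigl[v_0\gjms(v_0)-\ve\Q v_0^2\bigr]\, d\mu_{g_{\S^n}} \;=\; (1-\ve)\,\Q\,|\S^n|^{2s/n} \;=\; (1-\ve)\mathcal S \;<\; \mathcal S.
\end{equation*}
Combining the Brezis--Lieb splitting in $L^{2n/(n-2s)}$ with the additive splitting of the quadratic energy (weak convergence in $L^2$ leaves no cross term) and Beckner's sharp Sobolev inequality on $\S^n$ applied to the residual $v_k-v_\ve$, the concentration-compactness principle of P.-L. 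Lions rules out any concentration: a non-vanishing bubble would force $\mathcal S_\ve\geq\mathcal S$, contradicting the strict inequality just obtained. Hence $v_k\to v_\ve$ strongly in $L^{2n/(n-2s)}$, so $v_\ve\in\mathcal W$ and $v_\ve$ attains $\mathcal S_\ve$.

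To conclude, the Lagrange multiplier principle applied to the constraint $\int_{\S^n}v^{2n/(n-2s)}\,d\mu_{g_{\S^n}}=1$ yields that $v_\ve$ satisfies \eqref{EqVe} weakly. Standard elliptic bootstrap using the $L^p$ mapping properties of $\gjms$ (equivalently of $(-\Delta)^s$ up to lower order corrections on $\S^n$) upgrades $v_\ve$ to an element of $C^\infty(\S^n)$. Finally, positivity of $v_\ve$, and thus the fact that the pointwise non-negativity constraint is inactive, follows either from the strong maximum principle for the positive operator $\gjms-\ve\Q$, or equivalently from an integral representation in the spirit of Theorem \ref{thm-DE->IE} applied to $v_\ve$. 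The dominant difficulty throughout the proof is the concentration-compactness step; the role of the linear perturbation $-\ve\Q v^2$ is precisely to push the infimum strictly below the sharp Sobolev threshold, which is what makes that step work.
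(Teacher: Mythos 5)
Your approach is correct in its essentials, but takes a genuinely more streamlined route than the paper's in the two hard steps. For the lower bound, you exploit the monotonicity $\alpha_{2s,n}(l)\geq\alpha_{2s,n}(0)=\Q$ to peel off a factor $(1-\ve)$, whereas the paper instead derives the coercivity estimate $\int v\gjms(v)\geq C_{11}\|v\|_{H^s}^2-C_{12}\|v\|_{L^2}^2$ from \eqref{ProjectGJMS} together with the Stirling asymptotics of \eqref{eq-Gamma/Gamma}; your observation is cleaner. For compactness, you work directly with the $\gjms$-energy: weak $H^s$-convergence kills the cross term, Brezis--Lieb handles the $L^{2n/(n-2s)}$ mass, and Beckner's inequality applied to the residual $w_k=v_k-v_\ve$ combined with $\mathcal S_\ve<\mathcal S$ and subadditivity of $t\mapsto t^{(n-2s)/n}$ forces the remainder to vanish. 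The paper instead invokes the measure-theoretic Lions concentration-compactness principle for $(-\Delta)^{s/2}$ (atoms $\alpha_i,\beta_i$ and \eqref{AlphaBeta_i}), and therefore needs the additional technical lemma \eqref{eqLimDsW_k=0} — proved via the second-order Stirling expansion in Appendix B — to transfer the atomic decomposition from $(-\Delta)^{s/2}$ back to $\gjms$. Your route bypasses that lemma entirely, at the cost of explicitly invoking the full Beckner inequality (for signed functions), which the paper only uses implicitly through the atomic constant $\mathcal S$; both approaches ultimately rely on the same sharp constant, so this is a stylistic rather than logical difference. One small imprecision: the cross term $2\int v_\ve\gjms(w_k)$ vanishes because $w_k\rightharpoonup0$ weakly in $H^s$ (so that $\gjms^{1/2}w_k\rightharpoonup 0$ in $L^2$), not because of weak $L^2$-convergence as you wrote.

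The one place that deserves more care — in your proof and, to be fair, in the paper's as well — is the final positivity / Lagrange-multiplier step. Your first suggested route, a "strong maximum principle for $\gjms-\ve\Q$", is not available in general: for integer $s\geq 2$ this is a genuine higher-order operator and for non-integer $s$ a nonlocal one, and in neither case does a pointwise maximum principle hold on $\S^n$ without further structure. The second route (integral representation in the spirit of Theorem~\ref{thm-DE->IE}) is the right one, but as stated it looks circular, since it presupposes that $v_\ve$ already solves the equation \eqref{EqVe}, which is precisely what one needs the non-negativity constraint to be inactive to conclude. The way to close the loop is to note that minimizing over the cone $\mathcal W$ gives a variational \emph{inequality}, $\gjms v_\ve-\ve\Q v_\ve-\mathcal S_\ve v_\ve^{(n+2s)/(n-2s)}\geq 0$ as a distribution, and then the positivity of the Green kernel of $\gjms$ (exactly the device of Section~2) upgrades $v_\ve\geq 0$, $v_\ve\not\equiv 0$ to $v_\ve>0$ everywhere, whence the constraint is inactive and \eqref{EqVe} holds. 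The paper compresses this into the phrase "standard arguments", so you are not missing anything the paper addresses, but the maximum-principle justification should be dropped.
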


\begin{proof}
 Let us write 
\[
A_{2s, \ve}(v) := \int_{\S^n}\big[ v \gjms (v) - \ve \Q v^2 \big] d\mu_{g_{\S^n}}.
\]
Note that, $\alpha_{2s,n}(l)\geq 0$ for all $l$ as $2s<n$ and $\alpha_{2s,n}(l)$ grows like $l^{2s}$ for large $l$ by Stirling's formula; see \eqref{eq-Gamma/Gamma} in Appendix \ref{apd-alpha}. Due to the formula \eqref{ProjectGJMS} and the fact that the remaining finite rank terms are bounded in $L^2(\S^n)$, we obtain for all non-negative $ v\in H^s(\S^n)$ that
\begin{equation}\label{LowerBoundGJMS}
\int_{\S^n} v\gjms(v) d\mu_{g_{\S^n}} \geq C_{11}\|v\|_{H^s(\S^n)}^2 -C_{12} \|v\|_{L^2(\S^n)}^2
\end{equation}
with two positive constants $C_{11}$ and $C_{12}$. 
This and the H\"older inequality imply
\begin{align*}
A_{2s,\ve}(v)& \geq
- \big(C_{12}+ \ve \Q \big) \int_{\S^n}v^2 d\mu_{g_{\S^n}}\\
&\geq 
- \big(C_{12}+ \ve \Q \big) |\S^n|^\frac{2s}{n}\Big( \int_{\S^n} v^\frac{2n}{n-2s} d\mu_{g_{\S^n}} \Big)^\frac{n-2s}{n}.
\end{align*}
 From this we get $\mathcal S_\ve>-\infty$. In fact, thanks to $\ve>0$ one should have
\[
\mathcal S_\ve < \mathcal S.
\]
This is trivial because by testing with the constant $|\S^n|^{-\frac{n-2s}{2n}}$ function on $\S^n$ one should have
\begin{align*}
\mathcal S_\ve & \leq \int_{\S^n} (1- \ve) \Q |\S^n|^{-\frac{n-2s}{n}} d\mu_{g_{\S^n}} 
 <\Q |\S^n|^\frac{2s}{n} =\mathcal S . 
\end{align*}

Having this if we let $(v_k)_{k \geq 1}$ be a minimizing sequence for $\mathcal S_\ve$, then using \eqref{LowerBoundGJMS} again, we have $(v_k)_{k\geq 1}$ is bounded in $H^s(\S^n)$. By the lower semicontinuity, we have
\[
A_{2s,\ve}(\lim_{k\to \infty} v_k) \leq \lim_{k\to \infty} A_{2s,\ve}(v_k)=1. 
\] 
Therefore, using the concentration-compactness principle, see e.g. \cite[Theorem 4]{SM} for integer case and \cite[Theorem 1.1]{BSS18} for fractional case, there are two non-negative Borel regular measures $\mu$ and $\nu$ on $\S^n$ and a non-negative function $v_\ve \in H^{s}(\S^n)$ such that
\begin{equation}\label{eq-weakV_k}
v_k \rightharpoonup v_\ve \quad \text{weakly in $H^{s}(\S^n)$ and a.e. in $\S^n$},
\end{equation}
that
\begin{equation}\label{eq-->mu}
|(-\Delta_{\S^n})^{s/2} v_k|^2 d\mu_{g_{\S^n}} \rightharpoonup \mu \quad \text{weakly in the sense of measures},
\end{equation}
and that
\begin{equation}\label{eq-->nu}
 v_k^\frac{2n}{n-2s} \rightharpoonup \nu \quad \text{weakly in the sense of measures}, 
\end{equation}
up to a subsequence.  In addition, there is an at most countable set $\mathcal I$, a family of distinct points $\{ x_i : i\in \mathcal I\}\ \subset \S^n$, families of non-negative weights $\{\alpha_i : i \in \mathcal I \}$ and $\{\beta_i : i \in \mathcal I \}$ such that
\begin{equation}\label{eq--=nu}
\nu = v_\ve^\frac{2n}{n-2s}d\mu_{g_{\S^n}} + \sum_{i \in \mathcal I} \alpha_i \delta_{x_i},
\end{equation}
and
\begin{equation}\label{eq->=nu}
\mu \geq |(-\Delta_{\S^n})^{s/2} v_\ve|^2 d\mu_{g_{\S^n}} + \sum_{i \in \mathcal I} \beta_i \delta_{x_i},
\end{equation}
and
\begin{equation}\label{AlphaBeta_i}
\alpha_i^\frac{n-2s}{n} \leq \frac{\beta_i}{ \mathcal S} \quad \text{for all } i \in \mathcal I.
\end{equation}
As $v_k \in\mathcal W$ and $v_k^\frac{2n}{n-2s} \rightharpoonup \nu$ weakly in the sense of measures, testing \eqref{eq--=nu} with suitable constant functions gives
\[
\int_{\S^n} d\nu = 1,
\]
which, by making use of \eqref{eq--=nu}, further implies
\begin{equation}\label{eq-S=S}
1 = \int_{\S^n} v_\ve^\frac{2n}{n-2s} d\mu_{g_{\S^n}} + \sum_{i \in \mathcal I} \alpha_i .
\end{equation}
In particular, there holds $\alpha_i < 1$ for all $i \in \mathcal I$. In view of the inequality \eqref{eq->=nu} for $\mu$ gives
\begin{equation}\label{EstSveSvk}
\begin{split}
\mathcal S_\ve & = \lim_{k \to +\infty} \int_{\S^n} \big[ v_k \gjms (v_k) - \ve \Q v_k^2 \big] d\mu_{g_{\S^n}}\\
 & = \lim_{k \to +\infty} \int_{\S^n} \big[ v_k \gjms (v_k) -|(-\Delta_{\S^n})^{s/2} v_k|^2\big] d\mu_{g_{\S^n}} \\
 & \qquad + \lim_{k \to +\infty}\int_{\S^n}\big[ |(-\Delta_{\S^n})^{s/2}v_k|^2- \ve \Q v_k^2 \big] d\mu_{g_{\S^n}} \\
& \geq \lim_{k \to +\infty} \int_{\S^n} \big[ v_k \gjms (v_k) -|(-\Delta_{\S^n})^{s/2} v_k|^2\big] d\mu_{g_{\S^n}} \\
 & \qquad + \int_{\S^n} \big[|(-\Delta_{\S^n})^{s/2} v_\ve|^2 - \ve \Q v_\ve^2 \big] d\mu_{g_{\S^n}}+ \sum_{i \in \mathcal I} \beta_i.
\end{split}
\end{equation}
Set $w_k = v_k-v_\ve$, we now show that 
\begin{equation}\label{eqLimDsW_k=0}
 \lim_{k \to +\infty} \int_{\S^n} \big[ w_k \gjms (w_k) -|(-\Delta_{\S^n})^{s/2} w_k|^2\big] d\mu_{g_{\S^n}} =0.
\end{equation}
Let $l\in \N_0$ and $Y_l$ be the spherical harmonic of degree $l$ on $\S^n$. Since $(Y_l)_{l\in \N_0}$ is a basis of $L^2(\S^n)$, we can write 
\[
w_k = \sum_{l\in \N_0} b_{kl} Y_l.
\] 
Owing to the weak convergence of $w_k$ to 0 in $H^s(\S^n)$, see \eqref{eq-weakV_k}, we have, for each $l \in \N_0$, that $b_{kl} \to 0$ as $k\to \infty$. Note that $(w_k)_{k\geq1}$ is uniformly bounded in $H^s(\S^n)$, yielding $\sum_{l\in \N_0} b_{k,l}^2 l^{2s}$ is uniformly bounded in $k$ by $C_{13}$. Note also that
\[
(-\Delta_{\S^n})^{s/2} Y_l = l^{s/2}(l+n-1)^{s/2} Y_l
\]
and that
\[\gjms(Y_l)= \alpha_{2s,n}(l) Y_l.\]
Hence, 
\[
\Big| \int_{\S^n} \big[ w_k \gjms (w_k) -|(-\Delta_{\S^n})^{s/2} w_k|^2\big] d\mu_{g_{\S^n}}\Big| \leq \sum_{l\in \N_0} \big|\alpha_{2s,n}(l)-l^s(l+n-1)^s \big| b_{k,l}^2
\]
Thanks to  \eqref{eq-Gamma/Gamma} again, we have that for large $l$,
\[
\begin{split} 
\alpha_{2s,n}(l)-l^s(l+n-1)^s &=  l^{2s} + s (n-1) l^{2s-1}  +\frac{s}6 \big(3(2s-1)(n-1)^2-4s^2+3 \big) l^{2s-2}\\
&\qquad- \big( l^{2s}+s (n-1) l^{2s-1} +  \frac12 s(s-1)(n-1)^2 l^{2s-2}\big)+ O(l^{2s-3})_{l \nearrow +\infty}\\
&= s\big( \frac12(s-2) (n-1)^2-\frac23 s^2+\frac13\big) l^{2s-2}+O(l^{2s-3})_{l \nearrow +\infty}.
\end{split}
\]
Consequently, for $h$ sufficiently small, we take 
\[
m_0^2>\frac{2s}h \big| \frac12 (s-2)(n-1)^2-\frac23 s^2+\frac13\big|C_{13} 
\] to obtain
\[
\begin{split}
\int_{\S^n} & \big[ w_k \gjms (w_k) -|(-\Delta_{\S^n})^{s/2} w_k|^2\big] d\mu_{g_{\S^n}}\\
 &\leq \sum_{l=0}^{m_0} \big|\alpha_{2s,n}(l)-l^s(l+n-1)^s \big| b_{k,l}^2+2s\big| \frac12 (s-2)(n-1)^2-\frac23 s^2+\frac13\big| \sum_{l>m_0} b_{k,l}^2 l^{2s-2}\\
&\leq \sum_{l=0}^{m_0} \big|\alpha_{2s,n}(l)-l^s(l+n-1)^s\big| b_{k,l}^2+ \frac{2s\big| \frac12 (s-2)(n-1)^2-\frac23 s^2+\frac13\big| C_{13}}{m_0^2} \\
 &\leq \sum_{l=0}^{m_0} \big|\alpha_{2s,n}(l)-l^s(l+n-1)^s \big| b_{k,l}^2+h.
\end{split}
\]
Sending $k\to \infty$, we deduce that 
\[
\Big| \lim_{k \to +\infty} \int_{\S^n} \big[ w_k \gjms (w_k) -|(-\Delta_{\S^n})^{s/2} w_k|^2\big] d\mu_{g_{\S^n}}\Big| \leq h
\]
for any $h$ sufficiently small. The limit \eqref{eqLimDsW_k=0} thus follows. Using the weak convergence $w_k = v_k-v_\ve \rightharpoonup 0$ in $H^s (\S^n)$ we deduce that
\begin{align*}
&\int_{\S^n} \big[ v_k \gjms (v_k) -|(-\Delta_{\S^n})^{s/2} v_k|^2\big] d\mu_{g_{\S^n}} \\
&= \int_{\S^n} \big[ v_\ve \gjms (v_\ve) -|(-\Delta_{\S^n})^{s/2} v_\ve|^2\big] d\mu_{g_{\S^n}} +\int_{\S^n} \big[ w_k \gjms (w_k) -|(-\Delta_{\S^n})^{s/2} w_k|^2\big] d\mu_{g_{\S^n}}\\
&\qquad+o(1)_{k \nearrow +\infty}.
\end{align*}
Hence, combining this with \eqref{EstSveSvk} and \eqref{eqLimDsW_k=0} gives
\[
\begin{split}
\mathcal S_\ve & \geq \lim_{k \to +\infty} \int_{\S^n} \big[ v_k \gjms (v_k) -|(-\Delta_{\S^n})^{s/2} v_k|^2\big] d\mu_{g_{\S^n}} \\
 & \qquad + \int_{\S^n} \big[|(-\Delta_{\S^n})^{s/2} v_\ve|^2 - \ve \Q v_\ve^2 \big] d\mu_{g_{\S^n}}+ \sum_{i \in \mathcal I} \beta_i\\
& = \int_{\S^n} \big[ v_\ve \gjms (v_\ve) - \ve \Q v_\ve^2 \big] d\mu_{g_{\S^n}}+ \sum_{i \in \mathcal I} \beta_i.
\end{split}
\]
Using \eqref{AlphaBeta_i} and the boundedness $\alpha_i<1$ for any $i\in \mathcal I$, we get further
\[
\begin{split}
\mathcal S_\ve &\geq \mathcal S_\ve \int_{\S^n} v_\ve^{\frac{2n}{n-2s}} d\mu_{g_{\S^n}}+ \mathcal S \sum_{i \in \mathcal I} \alpha_i^{\frac{n-2s}n} \geq \mathcal S_\ve \int_{\S^n} v_\ve^{\frac{2n}{n-2s}} d\mu_{g_{\S^n}}+ \mathcal S \sum_{i \in \mathcal I} \alpha_i.
\end{split}
\]
Thanks to \eqref{eq-S=S} we eventually arrive at
\[
0 \geq \big( \mathcal S - \mathcal S_\ve \big) \sum_{i \in \mathcal I} \alpha_i \geq 0 .
\]
Keep in mind that $\mathcal S_\ve < \mathcal S$. Therefore, the only possibility for which the above inequalities occur is $\alpha_i = 0$ for all $i \in \mathcal I$. This implies that no concentration occurs, that is equivalent to saying that
\[
v_k \to v_\ve \quad \text{strongly in $H^s (\S^n)$}.
\]
In particular, $v_\ve \in \mathcal W$, and standard arguments show that $v_\ve$ solves
\[
\gjms (v_\ve) = \ve \Q v_\ve + \mathcal S_\ve v_\ve^\frac{n+2ss}{n-2s} \quad \text{in $\S^n$}.
\]
Hence
\begin{align*}
\mathcal S_\ve & \leq \int_{\S^n} \big[ v_\ve \gjms (v_\ve) - \ve \Q v_\ve^2 \big] d\mu_{g_{\S^n}}
 = \mathcal S_\ve \int_{\S^n} v_\ve^\frac{n+2s}{n-2s} d\mu_{g_{\S^n}} 
= \mathcal S_\ve .
\end{align*}
Hence, the constant $\mathcal S_\ve$ is attained by $v_\ve\geq 0$. This completes the proof.
\end{proof}

\begin{remark}
When $s=m$ is an integer, part of the above proof can be simplified. In this case, $\gjms$ is a local operator and is a linear combination of finite terms  (see \eqref{GJMS-integer}). Precisely, we do not have to go through \eqref{eqLimDsW_k=0} because
\begin{align*}
\lim_{k\to \infty} \int_{\S^n} v_k \mathbf P_n^{2m} (v_k) d\mu_{g_{\S^n}} & = \lim_{k\to \infty} \int_{\S^n} \big[ |\nabla^m v_k|^2 + \sum_{j=0}^{m-1} a_j |\nabla^i v_k|^2 \big] d\mu_{g_{\S^n}} \\
& \geq \int_{\S^n} |\nabla^m v_\ve|^2 d\mu_{g_{\S^n}} + \sum_{i \in \mathcal I} \beta_i + \sum_{j=0}^{m-1} a_j \int_{\S^n} |\nabla^i v_\ve|^2 d\mu_{g_{\S^n}} \\
& = \int_{\S^n} v_\ve \mathbf P_n^{2m} (v_\ve) d\mu_{g_{\S^n}} + \sum_{i \in \mathcal I} \beta_i ,
\end{align*}
thanks to \eqref{eq->=nu} and the strong convergence $\nabla^j v_k \to \nabla^j v_\ve$ in $L^2 (\S^n)$ for all $0 \leq j \leq m-1$. From this we immediately get
\[
\mathcal S_\ve \geq \int_{\S^n} \big[ v_\ve \mathbf P_n^{2m} (v_\ve) - \ve Q_n^{2m} v_\ve^2 \big] d\mu_{g_{\S^n}}+ \sum_{i \in \mathcal I} \beta_i.
\]
\end{remark}
Having Lemma \ref{lem-SSexistence}  in hand, we are able to prove Theorem \ref{thm-Sobolev} as we shall do now. By seeing our Liouville type result in Theorem \ref{thm-Liouville}, this is the place we need the smallness of $\ve$. 

\begin{proof}[Proof of Theorem \ref{thm-Sobolev}]
Let $\ve \in (0,1)$. By Lemma \ref{lem-SSexistence}, there is some non-negative, smooth function $v_\ve$ satisfying
\[
\gjms (v_\ve) = \ve \Q v_\ve +\mathcal S_\ve v_\ve^\frac{n+2s}{n-2s} \quad \text{in $\S^n$}.
\]
Then, it follows from Theorem \ref{thm-Liouville} that $v_\ve$ must be constant. Hence, on one hand, as $Q_n^{2s} = \gjms (1)$, we can compute to get
\[
\mathcal S_\ve = ( 1- \ve) \Q |\S^n|^{2s/n},
\]
on the other hand, by the definition of $\mathcal S_\ve$ we get
\begin{align*}
\frac{ \int_{\S^n} \big[ v \gjms (v) - \ve \Q v^2 \big] d\mu_{g_{\S^n}}}{\Big( \int_{\S^n} v^\frac{2n}{n-2s} d\mu_{g_{\S^n}} \Big)^{\frac{n-2s}{n}}}
 \geq ( 1- \ve) \Q | \S^n|^{2s/n}
\end{align*}
for any non-negative $v \in H^s (\S^n)$. Now letting $\ve \searrow 0$ we obtain
\begin{equation}\label{IneSoboblev}
\int_{\S^n} v \gjms (v) d\mu_{g_{\S^n}}
\geq \Q | \S^n|^\frac{2s}{n}
\Big( \int_{\S^n} v^\frac{2n}{n-2s} d\mu_{g_{\S^n}} \Big)^\frac{n-2s}{n} 
\end{equation}
for any non-negative $v \in H^s (\S^n)$.  This completes the proof of the critical case of \eqref{eq-sSobolev}, hence completing the proof of Theorem \ref{thm-Sobolev}.
\end{proof}

	
	\section*{Acknowledgments}

This work was initiated when QAN was visiting the Vietnam Institute for Advanced Study in Mathematics (VIASM) in 2022. QAN would like to thank VIASM for hospitality and financial support. QAN would also like to thank Ali Hyder for useful discussion on this topic. TTN is especially indebted to Professor Jean-Marc Delort and Professor Olivier Lafitte for their enthusiastic encouragement. We thank Van Hoang Nguyen for pointing out an inaccuracy in Section 4 in an earlier version of this paper. The research of QAN is funded by Vietnam National Foundation for Science and Technology Development (NAFOSTED) under grant number 101.02-2021.24 ``On several real and complex differential operators''.  Part of this work was announced in the report \cite{QuynhLe}.


\appendix 
\section{Estimate (\ref{eq-estimate-K1}) for the kernel $K$}
\label{apd-estimate-kernel}

This appendix is devoted to the proof of \eqref{eq-estimate-K1}, namely we prove that
\begin{align*}
\int_{\lambda \leq |z-x| \leq \overline\lambda +\overline\delta } K(x,\lambda; y, z) dz \leq C \big( |y-x| - \lambda \big) 
\end{align*}
for some $C>0$. More or less this estimate is standard, but we re-mention it for completeness. Our argument depends on the following two ingredients. First we mention the elementary inequality
\[
\Big| \frac 1{x^p} -\frac 1{y^p} \Big| \leq p |x-y| \max\Big\{ \frac 1{x^{p+1}}, \frac 1{y^{p+1}} \Big\}
\]
for any $x,y>0$ and any $p>0$. Next we observe that with $\kappa < n$, if $y \in \overline B(x,R)$, then the following integral
\begin{equation}\label{eq-eq}
\int_{\lambda \leq |z-x| \leq \overline\lambda +\overline\delta } \frac{1}{|y -z|^\kappa} dz \leq C(\overline\lambda, \overline\delta,|x|,R)
\end{equation}
is bounded from above, whose bound $C$ depends on $\overline\lambda$, $\overline\delta$, $|x|$, and $R$. This is simple because 
\begin{align*}
\int_{\lambda \leq |z-x| \leq \overline\lambda +\overline\delta } \frac{1}{|y -z|^\kappa} dz 
& =\int_{\{y+z : \lambda \leq |z-x| \leq \overline\lambda +\overline\delta \}} \frac{dz}{|z|^\kappa} 
 \leq \int_{B( \overline\lambda +\overline\delta +|x| +R)} \frac{ dz}{|z|^\kappa}
 =: C
\end{align*}
by triangle inequality. By using \eqref{eq-K} we obtain
\begin{align*}
\int_{\lambda+\delta_2 \leq |z-x| \leq \overline\lambda +\overline\delta } & K(x,\lambda; y, z) dz \\
\leq & \int_{\lambda \leq |z-x| \leq \overline\lambda +\overline\delta } K(x,\lambda; y, z) dz\\
 \leq & \int_{\lambda \leq |z -x| \leq \overline\lambda +\overline\delta } \Big| \frac{1}{|y -z|^{n-2s}} - \frac{1}{|y^{x,\lambda} -z|^{n-2s}}\Big| dz \\ 
&+\int_{\lambda \leq |z-x| \leq \overline\lambda +\overline\delta } \Big| \Big(\frac{\lambda}{|y-x|}\Big)^{n-2s} -1 \Big| \frac{1}{|y^{x,\lambda} -z|^{n-2s}} dz\\
=& I_1 + I_2.
\end{align*}
Using the above elementary inequality we immediately get
\begin{align*}
\Big| \frac{1}{|y -z|^{n-2s}} &- \frac{1}{|y^{x,\lambda} -z|^{n-2s}}\Big|\\
&\leq (n-2s) |y-y^{x,\lambda}| \max\Big\{\frac{1}{|y -z|^{n-2s+1}}, \frac{1}{|y^{x,\lambda} -z|^{n-2s+1}}\Big\},
\end{align*}
which, after making use of \eqref{eq-eq} twice, allows us to write
\begin{align*}
I_1 \leq C |y-y^{x,\lambda}|
\end{align*}
for some $C>0$. (To be able to apply \eqref{eq-eq} we note that $|y^{x,\lambda} - x| = \lambda^2/|y-x| \leq \lambda \leq \overline\lambda +\overline \delta$ for $y$ being satisfied $\lambda \leq |y-x| \leq \overline\lambda +\overline \delta$.) For the term $I_2$, we note that
\begin{align*}
\Big| \Big(\frac{\lambda}{|y-x|}\Big)^{n-2s} -1 \Big|
& =\lambda^{n-2s} \Big| \frac 1{|y-x|^{n-2s}} - \frac 1{\lambda^{n-2s}} \Big|\\
& \leq (n-2s) \lambda^{n-2s} \big| |y-x| - \lambda \big| \max \Big\{ \frac 1{|y-x|^{n-2s}} , \frac 1{\lambda^{n-2s}} \Big\}\\
& \leq (n-2s) \big( |y-x| - \lambda \big)
\end{align*}
as $|y-x| \geq \lambda$. Thus, applying \eqref{eq-eq} once gives
\begin{align*}
I_2 \leq C \big( |y-x| - \lambda \big) 
\end{align*}
for some $C>0$. Putting the above estimates for $I_1$ and $I_2$ together we arrive at
\begin{align*}
\int_{\lambda \leq |z-x| \leq \overline\lambda +\overline\delta } K(x,\lambda; y, z) dz & \leq C |y-y^{x,\lambda}| + C \big( |y-x| - \lambda \big) \\
&=C \Big| 1- \frac{\lambda^2}{|y-x|^2} \Big| \big|y-x\big| + C \big( |y-x| - \lambda \big) \\
& \leq C \big( |y-x| - \lambda \big) 
\end{align*}
for some $C>0$. 
	
\section{Asymptotic behavior of $\alpha_{2s,n}(l)-l^{2s}-s(n-1)l^{2s-1}$ for large $l$}
\label{apd-alpha}

Recall from \eqref{Alpha} the following
\[
\alpha_{2s,n}(l)= \frac{\Gamma(l+n/2+s)}{\Gamma(l+n/2-s)}.
\]
In \cite[page 13]{FKT}, the authors claim that $\alpha_{2s,n}(l)$ grows like $l^{2s}$ for large $l$ in the sense that
\[
\alpha_{2s,n}(l)=l^{2s} + o(1)_{l \nearrow +\infty}.
\]
To serve our purpose, see the proof of Lemma \ref{lem-SSexistence}, we need a refiner asymptotic behavior for $\alpha_{2s,n}(l)$ for large $l$, and this is the content of this appendix. To be more precise, we prove that
\begin{equation}\label{eq-Gamma/Gamma}
\alpha_{2s,n}(l)= l^{2s} + (n-1) s l^{2s-1}  +\frac{s}6 \big(3(2s-1)(n-1)^2-4s^2+3 \big) l^{2s-2}+ O(l^{2s-3})_{l \nearrow +\infty}.
\end{equation}
This can be derived by making use of  the following formula for any positive numbers $a$ and $b$, 
\begin{equation}\label{FractionGamma}
\begin{split}
\frac{\Gamma(x+b)}{\Gamma(x+a)} &= x^{b-a} + \frac12 (b-a)(b+a-1) x^{b-a-1}+ \frac{b-a}{12} 
\left(\begin{split}
&3(b-a)(b+a-1)^2\\
&-4(b^2+ba+a^2)+6(b+a)
\end{split}\right) x^{b-a-2}\\
&\qquad \qquad\qquad+ O(x^{b-a-3})_{x\nearrow +\infty},
\end{split}
\end{equation}
which refines the formula mentioned in \cite{ET51}
\[
\frac{\Gamma(x+b)}{\Gamma(x+a)} = x^{b-a} + \frac 12 (b-a)(b+a-1) x^{b-a-1} + O(x^{b-a-2})_{x\nearrow +\infty}.
\]
Indeed, using \eqref{FractionGamma} with $b=\frac{n}2+ s$ and $a=\frac{n}2-s>0$, we deduce \eqref{eq-Gamma/Gamma}.

Since there is no direct proof of \eqref{FractionGamma} in \cite{ET51} and to make our paper self-contained, we include here a proof for the reader's convenience. Our starting point is the following Stirling's formula 
\[
\ln\Gamma(x)= x\ln x-x +\frac12 \ln\big( \frac{2\pi}x\big) +\sum_{n=1}^{N}\frac{B(n)}{2n(2n-1)x^{2n-1}}+ O(x^{-2N})_{x\nearrow +\infty},
\]
where $B(n)$ is the $n$-th Bernoulli number. 
Hence, for any $b>0$, we have
\[
\begin{split}
\ln \Gamma(x+b) &=\frac12\ln(2\pi)+ \big(x+b -\frac12\big) \ln(x+b)-(x+b)+ \frac{B(2)}{2(x+ b)}\\
&\qquad+\frac{B(3)}{12(x+b)^3}+ O(x^{-4})_{x \nearrow +\infty}.
\end{split}
\]
Note that for $b>0$,
\[
\ln(x+b) =\ln x +\frac{b}x - \frac{b^2}{2x^2}+ \frac{b^3}{3x^3} +O(x^{-4})_{x\nearrow +\infty}.
\]
That implies 
\[
\begin{split}
\ln \Gamma(x+b) &= \frac12\ln(2\pi) + \big(x+b-\frac12\big) \Big( \ln x +\frac{b}x - \frac{b^2}{2x^2} + \frac{b^3}{3x^3} +O(x^{-4})_{x\nearrow +\infty}\Big)\\
&\qquad -(x+b) + \frac{B(2)}{2x} +\frac{B(3)}{12(x+b)^3}+ O(x^{-4})_{x \nearrow +\infty} \\
&= \frac12\ln(2\pi) -x + \big(x+b-\frac12\big)\ln x+ \frac{b(b-1)+B(2)}{2x}- \frac{b^2(2b-3)}{12x^2}+ O(x^{-3})_{x \nearrow +\infty}.
\end{split}
\]
For any positive numbers $a$ and $b$, we deduce that 
\[\begin{split}
\ln \Gamma(x+b) -\ln \Gamma(x+a ) &= (b-a)\ln x+ \frac{b(b-1)-a(a-1)}{2x} - \frac{b^2(2b-3)-a^2(2a-3)}{12x^2} \\
&\qquad\qquad\qquad+ O(x^{-3})_{x \nearrow +\infty}\\
&= (b-a)\ln x + \frac{ (b-a)(b+a-1)}{2x} - \frac{(b-a)(2(b^2+ba+a^2)-3(b+a))}{12x^2}\\
&\qquad\qquad\qquad+ O(x^{-3})_{x \nearrow +\infty}. 
\end{split}\]
Consequently, we obtain
\begin{equation}\label{GammaBA-1}
\begin{split}
\frac{\Gamma(x+b)}{\Gamma(x+a)} &= x^{b-a} \exp \Big( \frac{ (b-a)(b+a-1)}{2x} - \frac{(b-a)(2(b^2+ba+a^2)-3(b+a))}{12x^2}+ O(x^{-3})_{x \nearrow +\infty} \Big).
\end{split}
\end{equation}
For any $\alpha$ and $ \beta$, we have
\[\begin{split}
\exp \Big( \frac{\alpha}x - \frac{\beta}{x^2}+ O(x^{-3})_{x \nearrow +\infty} \Big) 
&=  1+ \Big( \frac{\alpha}x-\frac{\beta}{x^2}\Big) + \frac12 \Big( \frac{\alpha}x-\frac{\beta}{x^2}\Big) ^2+ O(x^{-3})_{x \nearrow +\infty}\\
&= 1+ \frac{\alpha}x+ \frac{\alpha^2-2\beta}{2x^2}+ O(x^{-3})_{x \nearrow +\infty},
\end{split}\]
that implies
\begin{equation}\label{GammaBA-2}
\begin{split}
&\exp \Big( \frac{ (b-a)(b+a-1)}{2x} - \frac{(b-a)(2(b^2+ba+a^2)-3(b+a))}{12x^2}+ O(x^{-3})_{x \nearrow +\infty} \Big) \\
&= 1 +  \frac{ (b-a)(b+a-1)}{2x} + \frac{(b-a)(3(b-a)(b+a-1)^2-4(b^2+ba+a^2)+6(b+a))}{12 x^2}\\
&\qquad\qquad\qquad\qquad\qquad\qquad+ O(x^{-3})_{x \nearrow +\infty}. 
\end{split}
\end{equation}
Combining \eqref{GammaBA-1} and \eqref{GammaBA-2} gives us our desired formula \eqref{FractionGamma}.


\section{The exact value of $\overline\lambda(x)$ for any $x \in \R^n\setminus \{0\}$}
\label{apd-lambda}

We discuss the exact value of $\overline\lambda(x)$ at any $x \in \R^n\setminus \{0\}$ in this appendix. It follows from Lemma \ref{LeM=002} that
\[
\overline\lambda (x) \geq |x| \quad \text{for any }x \in \R^n \setminus \{0\}. 
\]
But the above inequality is enough to obtain the symmetry of $u$; see Lemma \ref{lem-MS-stop}. Consequently, we conclude the main result saying that $v$ must be constant. Remarkably, with help of the main result we are able to obtain the exact value of $\overline \lambda$. We shall prove the following.

\begin{proposition}
There holds
\[
\overline\lambda (x) = \sqrt{1+|x|^2} \quad \text{for any }x \in \R^n \setminus \{0\},
\]
where $\overline\lambda (x)$ is given by \eqref{eq-lambda}.
\end{proposition}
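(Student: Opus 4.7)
The plan is to leverage Theorem \ref{thm-Liouville} in reverse: once we know that $v$ must be constant, the associated $u$ is of the very specific form $u(x) = c\big(2/(1+|x|^2)\big)^{(n-2s)/2}$, and then the comparison $u_{x,\lambda} \leq u$ becomes an explicit algebraic inequality which I can resolve by invoking the factorization already established in Lemma \ref{lem-<1}.

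First I would write down the Kelvin transform of the explicit $u$:
\[
u_{x,\lambda}(\xi) = c\Big(\frac{\lambda}{|\xi-x|}\Big)^{n-2s} \Big(\frac{2}{1+|\xi^{x,\lambda}|^2}\Big)^{\frac{n-2s}{2}},
\]
so that the pointwise inequality $u_{x,\lambda}(\xi)\leq u(\xi)$ is equivalent to
\[
|\xi-x|^2\big(1+|\xi^{x,\lambda}|^2\big) \;\geq\; \lambda^2\big(1+|\xi|^2\big).
\]
Next I would quote the identity established inside the proof of Lemma \ref{lem-<1}, namely
\[
|\xi-x|^2\big(1+|\xi^{x,\lambda}|^2\big) - \lambda^2\big(1+|\xi|^2\big) = \big(|\xi-x|^2-\lambda^2\big)\big(1+|x|^2-\lambda^2\big),
\]
which is an algebraic identity valid for every $\xi \neq x$ and every $\lambda>0$ (the proof there actually establishes this before concluding the strict inequality).

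From this factorization the upper bound $\overline\lambda(x) \leq \sqrt{1+|x|^2}$ is built into the definition \eqref{eq-lambda}. For the lower bound, fix any $\lambda$ with $0<\lambda \leq \sqrt{1+|x|^2}$ and any $\xi$ with $|\xi-x|\geq\lambda$: both factors on the right-hand side of the identity are non-negative, hence $u_{x,\lambda}(\xi)\leq u(\xi)$ everywhere in the relevant region. Therefore every $\mu \leq \sqrt{1+|x|^2}$ lies in the set whose supremum defines $\overline\lambda(x)$, which gives $\overline\lambda(x) \geq \sqrt{1+|x|^2}$ and concludes the equality.

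There is really no obstacle here: once the Liouville-type conclusion of Theorem \ref{thm-Liouville} is in hand, the whole statement is a one-line consequence of the factorization already present in the proof of Lemma \ref{lem-<1}. The only thing worth remarking is the cosmetic point that the extra restriction $\mu \leq \sqrt{1+|x|^2}$ built into \eqref{eq-lambda} is precisely the threshold beyond which the factor $1+|x|^2-\lambda^2$ would change sign, so the definition is tight and $\overline\lambda(x)$ attains the maximal admissible value.
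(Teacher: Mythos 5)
Your proposal is correct and uses the same two essential ingredients as the paper: the explicit form of $u$ coming from Theorem \ref{thm-Liouville}, and the algebraic factorization $|\xi-x|^2\big(1+|\xi^{x,\lambda}|^2\big) - \lambda^2\big(1+|\xi|^2\big) = \big(|\xi-x|^2-\lambda^2\big)\big(1+|x|^2-\lambda^2\big)$ established in the proof of Lemma \ref{lem-<1}. The only difference is cosmetic: you phrase the argument directly by showing $\mu=\sqrt{1+|x|^2}$ belongs to the defining set, whereas the paper frames the same computation as a proof by contradiction starting from $\overline\lambda(x)<\sqrt{1+|x|^2}$; your direct version is if anything slightly cleaner.
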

	
The proof goes as follows. By means of Theorem \ref{thm-Liouville} we know that the solution $v$ to \eqref{eq-MAIN} on $\S^n$ must be a non-negative constant, say $C$. Since $v$ is non-trivial, we also have $C > 0$. Thanks to \eqref{eq-u=v}, namely
\[
		u = \big( \frac 2{1+|x|^2} \big)^{\frac {n-2s}2} (v \circ \pi_N^{-1}),
\]
we conclude that
\[
u(x) = C \big( \frac 2{1+|x|^2} \big)^{\frac {n-2s}2} \quad\text{everywhere in } \R^n.
\]	
Next we assume by way of contradiction that 
\[
\overline \lambda (x) <\sqrt{1+|x|^2} \quad \text{for some } x \in \R^n.
\]
Then one can find sufficiently small $\ve >0$ in such a way that
\[
\big((\overline \lambda (x) + \ve)^2 - |y-x|^2\big) \big((\overline \lambda (x) + \ve) ^ 2 - 1 -|x|^2 \big)\geq 0
\]
for all $y$ satisfying $|y-x| \geq \overline \lambda (x) + \ve$. In fact, there holds
\[
\big(\lambda^2 - |y-x|^2\big) ( \lambda ^ 2 - 1 -|x|^2 )\geq 0
\]
for all $y$ satisfying $|y-x| \geq \lambda$ with all $\lambda \leq \overline \lambda (x) + \ve$. But this inequality is equivalent to
\[
\big(\frac { \lambda }{|y-x| } \big)^{n-2s} \Big( \frac 1{1+\Big|x+ \frac { \lambda^2 (y-x)}{|y-x|^2} \Big|^2 } \Big)^\frac{n-2s}2	 \leq \Big( \frac 1{1+ |y|^2 } \Big)^\frac{n-2s}2,
\]
which is nothing but
\[
u_{x,\lambda} (y) \leq u(y) \quad \text{for all } |y-x| \geq \lambda
\]
with $\lambda \leq \overline \lambda (x) + \ve$. But this contradicts the definition of $\overline\lambda (x) $ in \eqref{eq-lambda}. Thus, $\overline\lambda (x) = \sqrt{1+|x|^2}$ as claimed.

\end{document}